\newcommand{\ds}{\displaystyle }
\newcommand{\sr}[1]{{\mathcal #1}}
\newcommand{\dd}[1]{\mathbb{#1}}
\newcommand{\qvs}[1]{\big[ #1 \big]}
\newcommand{\ol}{\overline}
\newcommand{\sgn}{{\rm sgn}}
\newcommand{\eq}[1]{(\ref{eq:#1})}
\newcommand{\lem}[1]{Lemma~\ref{lem:#1}}
\newcommand{\cor}[1]{Corollary~\ref{cor:#1}}
\newcommand{\thr}[1]{Theorem~\ref{thr:#1}}
\newcommand{\dfn}[1]{Definition~\ref{dfn:#1}}
\newcommand{\cond}[1]{Condition~\ref{cond:#1}}
\newcommand{\rem}[1]{Remark~\ref{rem:#1}}
\newcommand{\fig}[1]{Figure~\ref{fig:#1}}
\newcommand{\app}[1]{Appendix~\ref{app:#1}}
\newcommand{\sectn}[1]{Section~\ref{sec:#1}}
\newcommand{\lemt}[1]{\ref{lem:#1}}
\newcommand{\thrt}[1]{\ref{thr:#1}}
\newcommand{\appt}[1]{\ref{app:#1}}
\newcommand{\sect}[1]{\ref{sec:#1}}
\newcommand{\pend}{\hfill \thicklines \framebox(6.6,6.6)[l]{}}
\newenvironment{proof*}[1]{\noindent {\sc  #1} \rm}{\pend}
\newtheorem{theorem}{Theorem}[section]
\newtheorem{lemma}{Lemma}[section]
\newtheorem{condition}{Condition}[section]
\newtheorem{remark}{Remark}[section]
\newtheorem{corollary}{Corollary}[section]
\newtheorem{definition}{Definition}[section]
\newcommand{\setnewcounter} {
\setcounter{subsection}{0}
\setcounter{equation}{0}
\setcounter{conjecture}{0}
\setcounter{assumption}{0}
\setcounter{question}{0}
\setcounter{definition}{0}
\setcounter{theorem}{0}
\setcounter{corollary}{0}
\setcounter{lemma}{0}
\setcounter{proposition}{0}
\setcounter{remark}{0}
}
\begin{document}
 \title{\Large \bf Multi-level reflecting Brownian motion on the half line and its stationary distribution}

\author{Masakiyo Miyazawa\footnotemark}
\date{\today, updated}

\maketitle

\begin{abstract}
A semi-martingale reflecting Brownian motion is a popular process for diffusion approximations of queueing models including their networks. In this paper, we are concerned with the case that it lives on the nonnegative half-line, but the drift and variance of its Brownian component discontinuously change at its finitely many states. This reflecting diffusion process naturally arises from a state-dependent single server queue, studied by the author \cite{Miya2024}. Our main interest is in its stationary distribution, which is important for application. We define this reflecting diffusion process as the solution of a stochastic integral equation, and show that it uniquely exists in the weak sense. This result is also proved in a different way by \citet{AtarCastReim2022}. In this paper, we consider its Harris irreducibility and stability, that is, positive recurrence, and derive its stationary distribution under this stability condition. The stationary distribution has a simple analytic expression, likely extendable to a more general state-dependent SRBM. Our proofs rely on the generalized Ito formula for a convex function and local time.
\end{abstract}

\keyword{Keywords:}{reflecting Brownian motion, multi level, discontinuous diffusion coefficients, stationary distribution, stochastic integral equation, generalized Ito formula, Tanaka formula, Harris irreducibility.}

\MSC{MSC Classification:}{60H20, 60J25, 60J55, 60H30, 60K25}

\footnotetext[1]{Department of Information Sciences,
Tokyo University of Science, Noda, Chiba, Japan}

\section{Introduction}\label{sec:introduction}

We are concerned with a semi-martingale reflecting Brownian motion (SRBM for short) on the nonnegative half-line in which the drift and variance of its Brownian component discontinuously change at its finitely many states. The partitions of its state space which is separated by these states are called levels. This reflecting SRBM is denoted by $Z(\cdot) \equiv\{Z(t); t \ge 0\}$, and will be called a one-dimensional multi-level SRBM (see \dfn{1d-SRBM}). In particular, if the number of the levels is $k$, then it is called a one-dimensional $k$-level SRBM. Note that the one-dimensional 1-level SRBM is just the standard SRBM on the half line.

Let $Z(\cdot)$ be a one-dimesional $k$-level SRBM. This reflecting process for $k=2$ arises in the recent study of \citet{Miya2024} for asymptotic analysis of a state dependent single server queue, called $2$-level $GI/G/1$ queue, in heavy traffic. This queueing model was motivated by an energy saving problem on servers for internet.

In \cite{Miya2024}, it is conjectured that the reflecting process $Z(\cdot)$ for $k=2$ is the weak solution of a stochastic integral equation (see \eq{SIE-Z} in \sectn{problem}) and, if its stationary distribution exists, then this distribution agrees with the limit of the scaled stationary distribution of the $2$-level $GI/G/1$ queue under heavy traffic, which is obtained under some extra conditions in Theorem 3.1 of \cite{Miya2024}. While writing this paper, we have known that the weak existence of the solution is shown by \citet{AtarCastReim2022} for a more general model than we have studied here, and its uniqueness is proved in \cite[Lemma 4.1]{AtarCastReim2023} under one of the conditions of this paper.

We refer to these results of \citet{AtarCastReim2022,AtarCastReim2023} as \lem{SIE-Z0}. However, we here prove a slightly different lemma, \lem{SIE-Z}, which is restrictive for the existence but less restrictive for the uniqueness. \lem{SIE-Z} includes some further results which will be used. Furthermore, its proof is sample path based and different from that of \lem{SIE-Z0}. We then show in \lem{irreducible} that $Z(\cdot)$ is Harris irreducible, and give a necessary and sufficient condition for it to be positive recurrent in \lem{stability}. These three lemmas are bases for our stochastic analysis.

The main results of this paper are \thr{general k} and \cor{general k} for the $k$-level SRBM, which derive the stationary distribution of  $Z(\cdot)$ without any extra condition under the stability condition obtained in \lem{stability}. However, we first focus on the case for $k=2$ in \thr{k=2}, then consider the case for general $k$ in \thr{general k}. This is because the presentation and proof for general $k$ are notationally complicated while the proof for $k=2$ can be used with minor modifications for general $k$. The stationary distribution for $k=2$ is rather simple, it is composed of two mutually singular measures, one is truncated exponential or uniform on the interval $[0,\ell_{1}]$, and the other is exponential on $[\ell_{1},\infty)$, where $\ell_{1}$ is the right endpoint of the first level at which the variance of the Brownian component and drift of the $Z(\cdot)$ discontinuously change. One may easily guess these measures, but it is not easy to compute their weights by which the stationary distribution is uniquely determined (see \cite{Miya2024}). We resolve this computation problem using the local time of the semi-martingale $Z(\cdot)$ at $\ell_{1}$.

The key tools for the proofs of the lemmas and theorems are the generalized Ito formula for a convex function and local time due to \citet{Tana1963}. We also use the notion of a weak solution of a stochastic integral equation. These formulas and notions are standard in stochastic analysis nowadays (e.g, see \cite{ChunWill1990,CoheElli2015,Harr2013,Kall2001,KaraShre1998}), but they are briefly supplemented in the appendix because they play major roles in our proofs. 

This paper is made up by five sections. In \sectn{problem}, we formally introduce a one-dimensional reflecting SRBM with state-dependent Brownian component which includes the one-dimensional multi-level SRBM as a special case, and present preliminary results including Lemmas \lemt{SIE-Z}, \lemt{irreducible} and \lemt{stability}, which are proved in \sectn{preliminary}. Theorems \thrt{k=2} and \thrt{general k} are presented and proved in Section \sect{stationary}. Finally, a related problems and a generalization of \thr{general k} are discussed in \sectn{concluding}. In the appendix, the definitions of a weak solution for a stochastic integral equation and local time of a semi-martingale are briefly discussed in Sections \appt{weak-solution} and \appt{local-time}, respectively.

\section{Problem and preliminary lemmas}
\label{sec:problem}
\setnewcounter

Let $\sigma(x)$ and $b(x)$ be measurable positive and real valued functions, respectively, of $x \in \dd{R}$, where $\dd{R}$ is the set of all real numbers. We are interested in the solution $Z(\cdot) \equiv \{Z(t); t \ge 0\}$ of the following stochastic integral equation, SIE for short. 
\begin{align}
\label{eq:SIE-Z}
  Z(t) = Z(0) & + \int_{0}^{t} \sigma(Z(u)) dW(u) + \int_{0}^{t} b(Z(u)) du + Y(t) \ge 0, \qquad t \ge 0,
\end{align}
where $W(\cdot)$ is the standard Brownian motion, and $Y(\cdot) \equiv \{Y(t); t \ge 0\}$ is a non-deceasing process satisfying that $\int_{0}^{t} 1(Z(u) > 0) dY(u) = 0$ for $t \ge 0$. We refer to this $Y(\cdot)$ as a regulator. The state space of $Z(\cdot)$ is denoted by $S \equiv \dd{R}_{+}$, where $\dd{R}_{+} = \{x \in \dd{R}; x \ge 0\}$.

As usual, we assume that all continuous-time processes are defined on stochastic basis $(\Omega,\sr{F}, \dd{F}, \dd{P})$, and right-continuous with left-limits and $\dd{F}$-adapted, where $\dd{F} \equiv \{\sr{F}_{t}; t \ge 0\}$ is a right-continuous filtration. Note that there are two kinds of solutions, strong and weak ones, for the SIE \eq{SIE-Z}. See \app{weak-solution} for their definitions. In this paper, we call weak solution simply by solution unless stated otherwise.

If functions $\sigma(x)$ and $b(x)$ are Lipschitz continuous and their squares are bounded by $K(1+x^{2})$ for some constant $K > 0$, then the SIE \eq{SIE-Z} has a unique solution even for the multidimensional SRBM which lives on a convex region (see \cite[Theorem 4.1]{Tana1979}). However, we are interested in the case that $\sigma(x)$ and $b(x)$ discontinuously change. In this case, any solution $Z(\cdot)$ may not exist in general, so we need some condition. As we discussed in \sectn{introduction}, we are particularly interested when they satisfy the following conditions. Let $\ol{\dd{R}} = \dd{R} \cup \{-\infty,+\infty\}$.
\begin{condition}
\label{cond:multi-level}
There are an integer $k \ge 2$ and a strictly increasing sequence $\{\ell_{j} \in \ol{\dd{R}}; j = 0,1,\ldots,k\}$ satisfying $\ell_{0} = - \infty$, $\ell_{j} > 0$ for $j=1,2,\ldots,k-1$ and $\ell_{k} = \infty$ such that functions $\sigma(x) > 0$ and $b(x) \in \dd{R}$ for $x \in \dd{R}$ are given by
\begin{align}
\label{eq:multi-level}
 & \sigma(x) = \sum_{j=1}^{k} \sigma_{j} 1(\ell_{j-1} \le x < \ell_{j}), \qquad b(x) = \sum_{j=1}^{k} b_{j} 1(\ell_{j-1} \le x < \ell_{j}),
\end{align}
where $1(\cdot)$ is the indicator function of proposition ``$\cdot$'', and $\sigma_{j} > 0$, $b_{j} \in \dd{R}$ for $j = 1,2,\ldots,k$ are constants.
\end{condition}

Since $Z(t)$ of \eq{SIE-Z} is nonnegative, $\sigma(x)$ and $b(x)$ are only used for $x \ge 0$ in \eq{SIE-Z}. Taking this into account, we partition the state space $S \equiv \dd{R}_{+}$ of $Z(\cdot)$ by $\ell_{1}, \ell_{2}, \ldots, \ell_{k-1}$ under \cond{multi-level} as follows.
\begin{align}
\label{eq:partition}
  S_{1} = [0,\ell_{1}), \quad S_{j} = [\ell_{j-1}, \ell_{j}), \;\; j=2,3,\ldots, k-1, \quad S_{k} = [\ell_{k-1},+\infty).
\end{align}
We call these $S_{j}$'s levels. Note that $\sigma(x)$ and $b(x)$ are constants in $x$ on each level, and they may discontinuously change at state $\ell_{j}$ for $j=1,2,\ldots, k-1$ under \cond{multi-level}.

We start with the existence of the solution $Z(\cdot)$ of \eq{SIE-Z}, which will be implied by the existence of the solution $X(\cdot) \equiv \{X(t); t \ge 0\}$ of the following stochastic integral equation in the weak sense.
\begin{align}
\label{eq:SIE-X}
  X(t) = X(0) & + \int_{0}^{t} \sigma(X(u)) dW(u) + \int_{0}^{t} b(X(u)) du, \qquad t \ge 0.
\end{align}
See \rem{existence}. Taking this into account, we will also consider the condition below for the existence of the weak solution $(X(\cdot), W(\cdot))$.
\begin{condition}
\label{cond:VD2}
The functions $\sigma(x)$ and $b(x)$ are measurable functions satisfying that 
\begin{align}
\label{eq:VD-bound}
 & \inf_{x \in \dd{R}} \sigma(x) > 0, \qquad \sup_{x \in \dd{R}} |b(x)| < \infty.
\end{align}
\end{condition}

For the unique existence of the weak solution $(X(\cdot), W(\cdot))$, \cond{VD2} is further weakened to \cond{minimal VD} by Theorem 5.15 of \cite{KaraShre1998} (see \sectn{weaker conditions} for its details). However, the latter condition is quite complicated. So, we take the simpler condition \eq{VD-bound}, which is sufficient for our arguments.

\begin{definition}
\label{dfn:1d-SRBM}
The solutions $Z(\cdot)$ of \eq{SIE-Z} under \cond{multi-level} is called a one-dimensional multi-level SRBM, in particular, called one-dimensional $k$-level SRBM if it has $k$ levels, namely, the total number of partitions of \eq{partition} is $k$, while it under \cond{VD2} is called a one-dimensional state-dependent SRBM with bounded drifts.
\end{definition}

Using the weak solution $(X(\cdot), W(\cdot))$ of \eq{SIE-X}, \citet{AtarCastReim2022,AtarCastReim2023} proves:
\begin{lemma}[Lemma 4.3 of \cite{AtarCastReim2022} and Lemma 4.1 of \cite{AtarCastReim2023}]
\label{lem:SIE-Z0}
(i) Under \cond{VD2}, the stochastic integral equation \eq{SIE-Z} has a weak solution such that $Y(t)$ is continuous in $t \ge 0$. (ii) Under \cond{multi-level}, the solution is weakly unique.
\end{lemma}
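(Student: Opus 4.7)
The plan for (i) is to obtain $Z(\cdot)$ by approximation and weak limit. Mollify $(\sigma,b)$ into smooth Lipschitz $(\sigma_n,b_n)$ preserving the bounds in \eq{VD-bound} uniformly in $n$; for each $n$, the classical Lipschitz theory together with the one-dimensional Skorokhod reflection map (which is Lipschitz in the sup norm on $C[0,T]$) produces a unique strong solution $(Z_n, Y_n)$ of the corresponding reflected SDE. Burkholder-Davis-Gundy together with the uniform coefficient bounds gives tightness of $(Z_n, Y_n, W)$ in $C[0,T]$; Prokhorov and Skorokhod representation deliver an almost-surely convergent subsequence, whose limit $(Z, Y)$ is identified as a weak solution of \eq{SIE-Z} by passing to the limit term by term. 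Continuity of $Y$ is inherited from the continuity of each $Y_n$ and the uniform convergence. The existence of the unreflected solution $X(\cdot)$ of \eq{SIE-X} alluded to in the statement enters only as a convenient building block; on $\dd{R}$ under \cond{VD2} its weak existence is classical (remove the bounded drift by Girsanov and apply the Engelbert-Schmidt criterion to the driftless equation).

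For weak uniqueness in (ii) under \cond{multi-level}, let $Z^{(1)}, Z^{(2)}$ be two weak solutions with common initial law. Inside each level $S_j$ the coefficients $(\sigma,b)$ are constant, so $Z$ is a drifted Brownian motion (reflecting at $0$ when $j=1$) whose law, killed at the endpoints of $S_j$, is uniquely determined by classical one-dimensional theory. By the strong Markov property at the successive first-passage times to $\{\ell_1,\ldots,\ell_{k-1}\}$, weak uniqueness reduces to showing that at each level boundary $\ell_j$ the joint law of the sojourn at $\ell_j$ and of the side on which $Z$ resumes its excursion is the same for both solutions. This is where the generalized Ito-Tanaka formula of \citet{Tana1963}, applied to $x \mapsto (x-\ell_j)^+$, is the key tool: it expresses the semimartingale local time $L^{\ell_j}$ in terms of the occupation times on either side of $\ell_j$ and the jump of $\sigma^2$ across $\ell_j$, which pins down the transition law and closes the gluing.

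The main obstacle I expect is the limit identification in (i). Since $(\sigma, b)$ are only measurable, weak convergence $Z_n \Rightarrow Z$ does not by itself imply $\int_0^t \sigma_n(Z_n(u))^2 du \to \int_0^t \sigma(Z(u))^2 du$. The tool to overcome this is a Krylov-type $L^p$ occupation-time estimate, available precisely because of the uniform lower bound $\inf \sigma > 0$ in \eq{VD-bound}, which guarantees that the limit $Z$ assigns zero Lebesgue-in-time measure to the discontinuity set of $(\sigma,b)$; under \cond{multi-level} this set is the finite collection $\{\ell_1,\ldots,\ell_{k-1}\}$, so a dominated convergence argument closes the identification. A secondary difficulty in (ii) is the absence of pathwise uniqueness at the $\ell_j$, since the coefficients are not Lipschitz there; one must settle for uniqueness in law and exploit the local-time / strong-Markov gluing rather than a Gronwall-type estimate.
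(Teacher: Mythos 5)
You should first note that the paper does not actually prove \lem{SIE-Z0}: it quotes it from \cite{AtarCastReim2022,AtarCastReim2023}, sketches (i) only in \rem{existence}, and then proves the different \lem{SIE-Z} instead. For (i) the route indicated in \rem{existence} is far shorter than yours: take the weak solution $X(\cdot)$ of \eq{SIE-X} (classical under \cond{VD2}, indeed under \cond{minimal VD}) with symmetrized coefficients $\sigma(x)=\sigma(|x|)$, $b(x)=\sgn(x)\,b(|x|)$, and observe via Tanaka's formula \eq{Tanaka} that $|X|(\cdot)$ solves \eq{SIE-Z} with $Y(\cdot)=L_{0}(\cdot)$, which is automatically continuous. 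Your mollification/tightness scheme is a legitimate alternative in principle, but as written the limit identification has a gap under \cond{VD2}: you argue that the limit process spends zero Lebesgue time on the discontinuity set of $(\sigma,b)$, yet in (i) the coefficients are merely measurable and bounded as in \eq{VD-bound}, so their discontinuity set need not be Lebesgue-null (it is a finite set only under \cond{multi-level}, which is not assumed in (i)). A Krylov-type estimate gives absolute continuity of the occupation measure, which is of no help against a discontinuity set of positive measure; the standard repair is to estimate $\dd{E}\int_0^T|\sigma_n^2-\sigma^2|(Z_n(u))\,du$ by $\|\sigma_n^2-\sigma^2\|_{L^p}$ uniformly along the approximating processes, and one must also prove tightness and identification of the reflection terms $Y_n$ — none of which is in your sketch.

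The more serious gap is in (ii). Your reduction ``by the strong Markov property at the successive first-passage times to $\{\ell_1,\ldots,\ell_{k-1}\}$'' breaks down at a single threshold: started at $\ell_j$, the process returns to $\ell_j$ at times accumulating at $0$ (since $\sigma>0$), so there is no first excursion, no discrete sequence of gluing epochs, and no well-defined ``side on which $Z$ resumes''; excursions from $\ell_j$ are indexed by local time, not by a sequence, and the sojourn time at $\ell_j$ is a.s.\ of zero Lebesgue measure. Applying the It\^o--Tanaka formula to $(x-\ell_j)^+$ yields an identity relating $L_{\ell_j}$ to occupation times on either side, but it does not by itself determine how the two half-line dynamics are interleaved in law — that interleaving is exactly the technical core of Lemma 4.1 of \cite{AtarCastReim2023}, which the paper describes as quite technical and does not reproduce. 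It is worth seeing how the paper's substitute, \lem{SIE-Z}, sidesteps this trap: it assumes constant coefficients below $\ell_1$ and splices sample paths at alternating crossings of two distinct levels $0<c<d<\ell_1$, so the splicing epochs are strictly separated and each piece is governed by an equation with known weak uniqueness (drifted reflected Brownian motion near $0$, and \eq{SIE-X} away from $0$). Your gluing-at-a-point scheme stops precisely where the real difficulty of (ii) begins, so as it stands it does not prove weak uniqueness.
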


The proof of (i) is easy  (see \rem{existence}) while the proof of (ii) is quite technical. Instead of this lemma, we will use the following lemma, in which (i) and (ii) of \lem{SIE-Z0} are proved under more restrictive and less restrictive conditions, respectively.

\begin{lemma}
\label{lem:SIE-Z}
Under \cond{VD2}, if there are constants $\sigma_{1}, \ell_{1} > 0$ and $b_{1} \in \dd{R}$ such that
\begin{align}
  \label{eq:VD2+}
  & \sigma(x) = \sigma_{1} > 0, b(x) = b_{1}, \qquad \forall x < \ell_{1},
\end{align}
then the stochastic integral equation \eq{SIE-Z} has a unique weak solution such that $Y(t)$ is continuous in $t \ge 0$ and $Z(\cdot)$ is a strong Markov process.
\end{lemma}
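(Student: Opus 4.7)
The plan is to obtain existence directly from \lem{SIE-Z0}(i), which applies under \cond{VD2} alone and already delivers $Y$ continuous; the substance of the lemma is therefore weak uniqueness under the hypothesis \eq{VD2+}, with the strong Markov property following from uniqueness in the standard way. My strategy is to decompose time along the successive hitting times of $\ell_{1}$ and to invoke uniqueness separately in the reflected lower regime on $[0,\ell_{1}]$ and in the unreflected upper regime on $[\ell_{1},\infty)$.

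On any sub-interval during which $Z(t) < \ell_{1}$, \eq{VD2+} together with the Skorohod property of $Y$ forces
\[
  Z(t) - Z(s) = \sigma_{1}(W(t)-W(s)) + b_{1}(t-s) + (Y(t)-Y(s)),
\]
with $Y$ increasing only when $Z=0$. This is the classical one-sided Skorohod problem for the semi-martingale $\sigma_{1} W(\cdot) + b_{1}(\cdot)$, which is pathwise uniquely solved by the reflection map; consequently, on such sub-intervals $(Z,Y)$ is a deterministic functional of the Brownian increments, so its joint law is uniquely determined by the starting state. On any sub-interval during which $Z(t) \ge \ell_{1} > 0$, the regulator condition $\int_{0}^{t} 1(Z(u)>0)\,dY(u)=0$ forces $Y$ to be constant, so $Z$ solves the unreflected SDE \eq{SIE-X} there with coefficients satisfying \cond{VD2}. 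Weak uniqueness for such an SDE with bounded measurable drift and measurable uniformly-positive $\sigma$ is Theorem~5.15 of \cite{KaraShre1998}.

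To paste the two regimes I would define $\tau_{0}=0$ and, inductively, let $\tau_{n+1}$ be the first time after $\tau_{n}$ at which $Z$ returns to $\ell_{1}$ after completing an excursion into the opposite regime. By the two uniqueness statements applied regime-by-regime, together with the strong Markov property of each canonical regime process, the law of $Z$ on $[0,\tau_{n}]$ is uniquely determined for every $n$. Non-accumulation $\tau_{n}\to\infty$ follows from positive uniform lower bounds on the expected excursion durations in either regime, themselves consequences of uniform ellipticity and bounded drift. Strong Markovianity of the full $Z(\cdot)$ is then obtained in the usual way from weak uniqueness, by considering the shifted solution after an arbitrary stopping time.

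The main obstacle I anticipate is checking that the regime-by-regime construction really assembles into a solution of \eq{SIE-Z} driven by a \emph{single} Brownian motion $W$, rather than by regime-dependent drivers; this should be handled by successively extending $W$ through the strong Markov property at each $\tau_{n}$, but doing it cleanly requires careful bookkeeping of the filtration. A secondary technical point is the non-accumulation of $\tau_{n}$ near $\ell_{1}$, where the discontinuity of $\sigma$ and $b$ could in principle cause fast oscillations; this is controlled by the uniform ellipticity in \cond{VD2}, which keeps the Brownian scale of each excursion bounded away from zero.
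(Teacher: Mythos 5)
Your outline follows the paper's general philosophy (Skorohod map with constant coefficients in the lower regime, weak uniqueness of the unreflected SDE via Theorem 5.15 of \cite{KaraShre1998} in the upper regime, then paste along stopping times), but the pasting step as you set it up contains a genuine gap. You decompose at the single level $\ell_{1}$ and define $\tau_{n+1}$ as the first return to $\ell_{1}$ ``after completing an excursion into the opposite regime.'' Because $\sigma$ is uniformly elliptic, immediately after any hitting time of $\ell_{1}$ the path a.s.\ crosses $\ell_{1}$ infinitely often in every interval $(\tau_{n},\tau_{n}+\varepsilon)$: the zero set of $Z-\ell_{1}$ has no isolated points and the excursions into either regime accumulate at each hitting time. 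Hence there is no ``first'' completed excursion, your $\tau_{n}$ do not advance (or are ill-defined), and the claim that $\tau_{n}\to\infty$ follows from ``positive uniform lower bounds on the expected excursion durations'' is false --- excursion durations straddling a level have infimum zero, and ellipticity is precisely what produces the infinitely many short crossings. The same accumulation problem blocks the regime-by-regime uniqueness induction: the maximal intervals on which $Z<\ell_{1}$ (or $Z\ge\ell_{1}$) form a countable family that cannot be exhausted by an increasing sequence of stopping times in the way your argument requires.

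The paper resolves exactly this point by discretizing the crossings with \emph{two} thresholds $0<c<d<\ell_{1}$ strictly inside the first level: up-crossing periods start at $c$ and end at the hitting time of $d$, during which $Z<d<\ell_{1}$ so the constant-coefficient reflected Brownian motion (Skorohod map) applies; down-crossing periods start at $d$ and end at the hitting time of $c$, during which $Z>c>0$ so $Y$ is flat and the state-dependent unreflected SDE applies --- note this piece is allowed to cross $\ell_{1}$ arbitrarily often, which is harmless because uniqueness is invoked for the whole stopped piece, not per excursion. Since each cycle must traverse the gap $[c,d]$, the cycle times are genuinely increasing, and the i.i.d.\ cycle durations sum to infinity a.s., giving non-accumulation without any lower bound on excursion lengths. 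If you replace your single-level decomposition by such a two-level scheme (or otherwise handle the accumulation of level crossings, as in the technical argument of \cite{AtarCastReim2023}), the rest of your plan --- existence from \lem{SIE-Z0}(i) or by direct construction, the filtration bookkeeping for a single driving $W$, and the strong Markov property from pasting strong Markov pieces at stopping times --- goes through along the lines of the paper's proof.
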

We prove this lemma in \sectn{existence}, which is different from the proof of \lem{SIE-Z0} by \citet{AtarCastReim2022,AtarCastReim2023}.

The main interest of this paper is to derive the stationary distribution of the $Z(\cdot)$ for the one-dimensional multi-level SRBM under an appropriate stability condition. Since this  reflecting diffusion process satisfies the conditions of \lem{SIE-Z}, $Z(\cdot)$ is a strong Markov process. Hence, our first task for deriving its stationary distribution is to consider its irreducibility and positive recurrence. To this end, we introduce Harris irreducibility and recurrence following \cite{MeynTwee1993a}. Let $\sr{B}(\dd{R}_{+})$ be the Borel field, that is, the minimal $\sigma$-algebra on $\dd{R}_{+}$ which contains all open sets of $\dd{R}_{+}$. Then, a real valued process $X(\cdot)$ which is right-continuous with left-limits is called Harris irreducible if there is a non-trivial $\sigma$-finite measure $\psi$ on $(\dd{R}_{+}, \sr{B}(\dd{R}_{+}))$ such that, for $B \in \sr{B}(\dd{R}_{+})$, $\psi(B) > 0$ implies
\begin{align}
\label{eq:Harris1}
  \dd{E}_{x}\left[\int_{0}^{\infty} 1(X(u) \in B) du\right] > 0, \qquad \forall x \in \dd{R}_{+},
\end{align}
while it is called Harris recurrent if \eq{Harris1} can be replaced by
\begin{align}
\label{eq:Harris2}
  \dd{P}_{x}\left[ \int_{0}^{\infty} 1(X(u) \in B) du = \infty \right] = 1, \qquad \forall x \in \dd{R}_{+},
\end{align}
where $\dd{P}_{x}(A) = \dd{P}(A|X(0)=x)$ for $A \in \sr{F}$, and $\dd{E}_{x}[H|X(0)=x]$ for a random variable $H$.

Harris conditions \eq{Harris1} and \eq{Harris2} are related to hitting times. Define the hitting time at a subset of the state space $S$ as
\begin{align}
\label{eq:tau-B}
  \tau_{B} = \inf \{t \ge 0; X(t) \in B\}, \qquad B \in \sr{B}(\dd{R}_{+}),
\end{align}
where $\tau_{B} = \infty$ if $X(t) \not\in B$ for all $t \ge 0$. We denote $\tau_{B}$ simply by $\tau_{a}$ for $B = \{a\}$. Then, it is known that Harris recurrent condition \eq{Harris2} is equivalent to
\begin{align}
\label{eq:Harris3}
  \dd{P}_{x}[\tau_{B} < \infty] = 1, \qquad \forall x \ge 0,
\end{align}
See Theorem 1 of \cite{KaspMand1994} (see also \cite{MeynTwee1993a}) for the proof of this equivalence. However, Harris irreducible condition \eq{Harris1} may not be equivalent to $\dd{P}_{x}[\tau_{B} < \infty]>0$. In what follows, $\tau_{B}$ is defined for the process to be discussed unless stated otherwise.

Using those notions and notations, we present the following basic facts for the one-dimensional state-dependent SRBM $Z(\cdot)$ with bounded drifts, where $Z(\cdot)$ is strong Markov by \lem{SIE-Z}.

\begin{lemma}
\label{lem:irreducible}
For the one-dimensional state-dependent SRBM with bounded drifts, if the condition \eq{VD2+} of \lem{SIE-Z} is satisfied, (i) it is Harris irreducible, and (ii) $\dd{E}_{x}[\tau_{a}] < \infty$ for $0 \le x < a$.
\end{lemma}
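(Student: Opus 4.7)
\begin{proof*}{Proof plan.}
I would split the two claims, handling (ii) with a direct Dynkin/Lyapunov argument that uses only the uniform bounds $\sigma_{*}:=\inf_{x}\sigma(x)>0$ and $B_{*}:=\sup_{x}|b(x)|<\infty$ from Condition~\condt{VD2}, and (i) by combining a Girsanov change of measure with the explicit reflected-Brownian structure on $[0,\ell_{1})$ supplied by the extra hypothesis \eq{VD2+}.

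For (ii), fix $a>0$ and take $\beta=2(1+B_{*})/\sigma_{*}^{2}$. The smooth Lyapunov function $f(x)=(e^{\beta a}-e^{\beta x})/\beta$ is nonnegative on $[0,a]$, vanishes at $a$, satisfies $f'(0)=-1$, and a one-line calculation gives
\[
  \tfrac{1}{2}\sigma(x)^{2}f''(x)+b(x)f'(x) = -e^{\beta x}\Bigl(\tfrac{\sigma(x)^{2}\beta}{2}+b(x)\Bigr) \le -1, \qquad x\in[0,a].
\]
I would apply Ito's formula to $f(Z(\cdot))$, observe that because $Y$ only increases on $\{Z=0\}$ one has $\int_{0}^{t} f'(Z)\,dY=f'(0)Y(t)=-Y(t)$, localize the Ito integral by stopping times $S_{n}\uparrow\infty$, and take expectations at $t\wedge\tau_{a}\wedge S_{n}$; since $f\ge 0$ and $Y\ge 0$, this yields $\dd{E}_{x}[t\wedge\tau_{a}\wedge S_{n}]\le f(x)$. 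Letting $n,t\to\infty$ by monotone convergence gives $\dd{E}_{x}[\tau_{a}]\le f(x)<\infty$ for $0\le x<a$.

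For (i), I would take $\psi$ to be Lebesgue measure on $(0,\ell_{1})$, fix a Borel set $B\subset(0,\ell_{1})$ with $\psi(B)>0$, and use the strong Markov property from \lem{SIE-Z} at the hitting time $\tau_{\ell_{1}/2}$. It then suffices to establish (a) $\dd{P}_{x}[\tau_{\ell_{1}/2}<\infty]>0$ for every $x\ge 0$, and (b) $\dd{E}_{\ell_{1}/2}[\int_{0}^{\tau_{\ell_{1}}}1(Z(u)\in B)\,du]>0$. For (b), \eq{VD2+} makes $Z$ on $[0,\tau_{\ell_{1}})$ a standard reflecting Brownian motion on $[0,\ell_{1})$ with constant coefficients $(\sigma_{1},b_{1})$, reflected at $0$ and killed at $\ell_{1}$, whose sub-Markov transition density is strictly positive on $(0,\ell_{1})\times(0,\ell_{1})$ for every $t>0$. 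For (a), the boundedness $|b/\sigma|\le B_{*}/\sigma_{*}$ makes Novikov's condition trivial on each $[0,T]$, so the equivalent measure $\tilde{\dd{P}}$ obtained by removing the drift is well defined; under $\tilde{\dd{P}}$ the equation becomes $dZ=\sigma(Z)\,d\tilde W+dY$, and Dambis--Dubins--Schwarz identifies $Z(\cdot)$ as a time change of drift-free reflecting Brownian motion on $[0,\infty)$, which is recurrent and hits $\ell_{1}/2$ almost surely. Mutual absolute continuity on $\sr{F}_{T}$ for $T$ large then transports this to $\dd{P}_{x}[\tau_{\ell_{1}/2}\le T]>0$.

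The hard part is step~(a) for $x>\ell_{1}$: since \condt{VD2} imposes no stability from above, a direct Lyapunov argument dual to that in~(ii) need not succeed, and Girsanov is the cleanest workaround because boundedness of $b/\sigma$ is exactly what \condt{VD2} supplies. A secondary, routine technicality is the localization in~(ii), needed because $\sigma$ is not bounded above on $[0,a]$ under \condt{VD2}, so the Ito integral is only a local martingale a priori.
\end{proof*}
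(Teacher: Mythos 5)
Your plan is correct, and it splits into one part that mirrors the paper and one that genuinely departs from it. For (ii) you are doing essentially what the paper does: an exponential test function of $Z$ with a positive exponent chosen from the bounds $\inf_x\sigma>0$, $\sup_x|b|<\infty$, Ito's formula stopped at $\tau_a\wedge t$, the sign of the regulator term, and optional stopping/monotone convergence; you merely package it as a Lyapunov function vanishing at $a$, while the paper bounds $\dd{E}_x[\tau_a\wedge t]\le(e^{\theta a}-e^{\theta x})/\varepsilon$ directly from \eq{G-Ito2}. (Your explicit localization $S_n$ is in fact a welcome extra care: the paper invokes $\sigma^2_{\max}<\infty$, which \condt{VD2} does not literally supply.) For (i) the routes differ. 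The paper stays inside the Ito/local-time toolkit it has already built: reachability from above is proved by the same exponential test function with $\theta<0$ and a contradiction (Gronwall-type growth) argument giving $\dd{P}_x[\tau_a<\infty]>0$ for $x>a$, and the occupation positivity on the first level is derived by hand from Tanaka's formulas \eq{Tanaka+}--\eq{Tanaka-} together with the occupation-time identity \eq{L-time1}, choosing $\psi$ as Lebesgue measure on $[0,y]$ or $[y,\ell_1]$ according to the sign of $b_1$. You instead get reachability of $\ell_1/2$ from any $x$ by Girsanov (Novikov is immediate from \eq{VD-bound}) plus a DDS time change reducing to recurrence of driftless reflecting Brownian motion, and you get the occupation positivity by citing strict positivity of the transition (Green) density of the constant-coefficient RBM on $[0,\ell_1)$ killed at $\ell_1$, which \eq{VD2+} makes available before $\tau_{\ell_1}$. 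Both arguments are sound; yours is conceptually cleaner for the reachability step and makes transparent exactly which hypotheses are used, while the paper's is more self-contained (it never leaves the generalized Ito/Tanaka machinery and does not need the transition-density fact or a change of measure). Two small points to tighten if you write yours out: justify that the time-changed driftless equation still solves the Skorokhod problem (uniqueness of the reflection map commuting with the continuous time change $A(t)=\int_0^t\sigma^2(Z)du\ge\sigma_*^2 t$), and note that absolute continuity only on each $\sr{F}_T$ is why you obtain positivity, not almost-sure finiteness, of $\tau_{\ell_1/2}$ under $\dd{P}_x$ — which is all that is needed.
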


\begin{remark}
\label{rem:irreducible}
(ii) is not surprising because we can intuitively see that the drift is pushed to the upward direction by reflection at the origin and the positive-valued variances.
\end{remark}

\begin{lemma}
\label{lem:stability}
For the one-dimensional state-dependent SRBM with bounded drifts, if the condition \eq{VD2+} of \lem{SIE-Z} is satisfied and if there are constants $\ell_{*}, b_{*}$ and $\sigma_{*}$ such that
\begin{align}
\label{eq:VD3}
    & \sigma(x) = \sigma_{*} > 0, b(x) = b_{*}, \qquad \forall x \ge \ell_{*} > \ell_{1}, 
\end{align}
then $Z(\cdot)$ has a stationary distribution if and only if $b_{*} < 0$. In particular, the one-dimensional $k$-level SRBM has a stationary distribution if and only if $b_{k} < 0$. 
\end{lemma}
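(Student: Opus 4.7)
My approach is to handle sufficiency (existence of a stationary distribution when $b_{*} < 0$) with a Foster-Lyapunov drift inequality and to handle necessity (non-existence when $b_{*} \ge 0$) through a Tanaka formula applied at levels above $\ell_{*}$, combined with the occupation-time formula.

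For sufficiency I would take the $C^{1}$ Lyapunov function $V(x) = (x-\ell_{*})_{+}^{2}$, whose piecewise-continuous second derivative is $V''(x) = 2 \cdot 1(x > \ell_{*})$. Since $V'\equiv 0$ on $[0,\ell_{*}]$, the reflection term $\int_{0}^{t} V'(Z(u))\,dY(u)$ in Ito's formula vanishes (because $Y$ grows only at the origin), and \eq{VD3} gives
\begin{align*}
(\mathcal{L}V)(x) = \bigl(2(x-\ell_{*})b_{*} + \sigma_{*}^{2}\bigr)\, 1(x > \ell_{*}),
\end{align*}
which tends to $-\infty$ as $x\to\infty$ because $b_{*} < 0$; together with $\mathcal{L}V \equiv 0$ on $[0,\ell_{*}]$, this yields the drift condition $\mathcal{L}V \le -c + K\,1_{[0,L]}$ for a compact interval $[0,L]$ and constants $c,K > 0$. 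Combined with the Harris irreducibility granted by \lem{irreducible} and the strong Markov property from \lem{SIE-Z}, the Meyn-Tweedie framework \cite{MeynTwee1993a} delivers positive Harris recurrence, hence a stationary distribution.

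For necessity, suppose a stationary probability $\pi$ exists; fix $a \ge \ell_{*}$ and apply the generalized Ito (Tanaka) formula to the bounded convex function $V_{a,N}(x) = (x-a)_{+} - (x-a-N)_{+}$, whose distributional second derivative is $\delta_{a} - \delta_{a+N}$. Because $a > 0$ the regulator contributes nothing on $\{Z > a\}$, and since $\sigma = \sigma_{*}$, $b = b_{*}$ on $[\ell_{*},\infty)$, taking expectations under $\dd{P}_{\pi}$ (the stochastic integral has bounded integrand and hence zero mean, and $V_{a,N}(Z(t))$ is bounded so stationarity kills the LHS) leaves
\begin{align*}
\dd{E}_{\pi}[L^{a}(t)] - \dd{E}_{\pi}[L^{a+N}(t)] = -2 b_{*}\, t\, \pi((a,a+N)).
\end{align*}
The occupation-time formula $\int_{\dd{R}} \dd{E}_{\pi}[L^{a}(t)]\,da = t\int \sigma(x)^{2}\,\pi(dx) < \infty$ supplies a sequence $N_{n}\to\infty$ along which $\dd{E}_{\pi}[L^{a+N_{n}}(t)]\to 0$; in the limit, $\dd{E}_{\pi}[L^{a}(t)] = -2b_{*}\,t\,\pi((a,\infty))$ for every $a\ge \ell_{*}$. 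Integrating this identity over $a\in[\ell_{*},\infty)$, applying the occupation-time formula on the left and Fubini on the right, gives
\begin{align*}
\sigma_{*}^{2}\,t\,\pi([\ell_{*},\infty)) = -2 b_{*}\, t\, \dd{E}_{\pi}[(Z-\ell_{*})_{+}].
\end{align*}
For $b_{*} \ge 0$ the right-hand side is non-positive, while Harris irreducibility forces $\pi([\ell_{*},\infty)) > 0$, making the left-hand side strictly positive -- the desired contradiction.

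The main obstacle I anticipate is the delicate limit-passage in the necessity step: rigorously extracting a subsequence along which $\dd{E}_{\pi}[L^{a+N_{n}}(t)]\to 0$, confirming that the stochastic-integral term in the Tanaka decomposition is a true martingale under $\dd{P}_{\pi}$, and, when $b_{*}=0$, deriving a contradiction without access to a density for $\pi$ (the integrated identity above bypasses this cleanly). The sufficiency step is largely mechanical once the Lyapunov function is identified, but will require a careful check that the weak-solution construction of \eq{SIE-Z} from \lem{SIE-Z} is compatible with the strong-Markov and irreducibility hypotheses invoked by \cite{MeynTwee1993a}.
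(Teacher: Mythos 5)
Your proposal is essentially correct in substance but takes a genuinely different route from the paper, and the comparison is instructive. For necessity the paper is terse: it appeals to the strong law of large numbers for $b_{*}>0$ and asserts null recurrence for $b_{*}=0$. Your stationary Tanaka/occupation-time identity $\sigma_{*}^{2}\,\pi([\ell_{*},\infty)) = -2b_{*}\,\dd{E}_{\pi}[(Z-\ell_{*})^{+}]$ is sharper, treats $b_{*}>0$ and $b_{*}=0$ uniformly, and is very much in the spirit of how the paper itself extracts the stationary law in Section 3 (expectations of generalized Ito formulas under stationarity plus local-time balance identities such as \eq{L-time3}); the only loose end is the final step ``Harris irreducibility forces $\pi([\ell_{*},\infty))>0$'': the irreducibility measure $\psi$ constructed in the proof of \lem{irreducible} is supported in $[0,\ell_{1}]$, so it does not by itself charge $[\ell_{*},\infty)$. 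Instead use part (ii) of \lem{irreducible} (or \eq{E-tau-a finite}) to see that from every state the process reaches any level $a>\ell_{*}$ almost surely and then, by path continuity and nondegeneracy, accumulates positive expected occupation time in $[\ell_{*},\infty)$, which is incompatible with $\pi([\ell_{*},\infty))=0$ under stationarity. For sufficiency the paper does not use a Lyapunov function at all: above $\ell_{*}$ the process coincides with a Brownian motion with drift $b_{*}<0$ until it hits $\ell_{*}$, so optional sampling gives the exact bound $\dd{E}_{x}[\tau_{a}]=(x-a)/(-b_{*})$ (\lem{Harris}), and together with \lem{irreducible}(ii) this produces i.i.d.\ regeneration cycles between two fixed levels $x<y$ above $\ell_{*}$ with finite mean, whence the stationary distribution is written down directly by the cycle formula \eq{invariant-m}.

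The one genuine gap is in your sufficiency step: invoking the Meyn--Tweedie continuous-time results with only ``Harris irreducibility $+$ drift inequality $+$ strong Markov'' is not enough. Those theorems require the set $[0,L]$ appearing in $\mathcal{L}V \le -c + K\,1_{[0,L]}$ to be petite (equivalently some skeleton/T-process or uniform minorization condition), and neither the paper's \lem{irreducible} (a pointwise statement) nor your argument establishes petiteness of compact sets; without it, the drift inequality alone does not deliver positive Harris recurrence. You can close the gap in two ways: (a) prove $[0,L]$ is petite by uniformizing, over initial states in $[0,L]$, the local-time lower bounds used in the proof of \lem{irreducible}; or (b) bypass Meyn--Tweedie entirely: your drift inequality already gives $\dd{E}_{x}[\tau_{[0,L]}]\le V(x)/c$, and since the paths are continuous, entering $[0,L]$ from above means hitting the point $L$, so $\dd{E}_{x}[\tau_{L}]<\infty$ for $x>L$; combining this with $\dd{E}_{x}[\tau_{y}]<\infty$ for $x<y$ from \lem{irreducible}(ii) gives finite-mean regeneration cycles and you finish exactly as the paper does. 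Route (b) is the cheaper fix and shows that the quadratic Lyapunov function is doing the same work as the paper's explicit optional-sampling computation, only less explicitly.
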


These lemmas may be intuitively clear, but their proofs may have own interests because they are not immediate and we observe that the Ito formula nicely work. So we prove Lemmas \lemt{irreducible} and \lemt{stability} in Sections \sect{irreducible} and \sect{stability}, respectively. We are now ready to study the stationary distribution of $Z(\cdot)$.

\section{Stationary distribution of multi-level SRBM}
\label{sec:stationary}
\setnewcounter

We are concerned with the multi-level SRBM. Denote the number of its levels by $k$. We first introduce basic notations. Let $\sr{N}_{k} = \{1,2,\ldots,k\}$, and define
\begin{align*}
  \beta_{j} = 2 b_{j}/\sigma_{j}^{2}, \quad j \in \sr{N}_{k}.
\end{align*}
In this section, we derive the stationary distribution of the one-dimensional $k$-level SRBM for arbitrary $k \ge 2$. We first focus on the case for $k=2$ because this is the simplest case but its proof contains all ideas will be used for general $k$.

\subsection{Stationary distribution for $k=2$}
\label{sec:k=2}

Throughout \sectn{k=2}, we assumed that $k=2$.

\begin{theorem}[The case for $k=2$]
\label{thr:k=2}
The $Z(\cdot)$ of the one-dimensional $2$-level SRBM has a stationary distribution if and only if $b_{2} < 0$, equivalently, $\beta_{2} < 0$. Assume that $b_{2} < 0$, and let $\nu$ be the stationary distribution of $Z(t)$, then $\nu$ is unique and has a probability density function $h$ which is given below.\\
(i) If $b_{1} \not= 0$, then
\begin{align}
\label{eq:Z1-h}
  h(x) = d_{11} h_{11}(x) + d_{12} h_{2}(x), \qquad x \ge 0,
\end{align}
where $h_{11}$ and $h_{2}$ are probability density functions defined as
\begin{align}
\label{eq:h(b1not0)}
 & h_{11}(x) = \frac {e^{\beta_{1} (x - \ell_{1})} \beta_{1}}{e^{\beta_{1} \ell_{1}} - 1}1(0 \le x < \ell_{1}),
\qquad
  h_{2}(x) = - \beta_{2} e^{\beta_{2} (x - \ell_{1})} 1(x \ge \ell_{1}),
\end{align}
and $d_{1j}$ for $j=1,2$ are positive constants defined by
\begin{align}
\label{eq:d(b1not0)}
   d_{11} = \frac {b_{2} (e^{-\beta_{1} \ell_{1}} - 1)} {b_{1} + b_{2} (e^{-\beta_{1} \ell_{1}} - 1)}, \qquad d_{12} = 1 -d_{11} = \frac {b_{1}} {b_{1} + b_{2} (e^{-\beta_{1} \ell_{1}} - 1)}.
\end{align}
(ii) If $b_{1} = 0$, then
\begin{align}
\label{eq:Z0-h}
  h(x) = d_{01} h_{01}(x) + d_{02} h_{2}(x),
\end{align}
where $h_{2}$ is defined in \eq{h(b1not0)}, and
\begin{align}
\label{eq:h(b1=0)}
 & h_{01}(x) = \frac 1{\ell_{1}} 1(0 \le x < \ell_{1}),\\
\label{eq:d(b1=0)}
 &  d_{01} = \frac {-2b_{2} \ell_{1}} {\sigma_{1}^{2} - 2b_{2} \ell_{1}}, \qquad d_{02} = 1 -d_{01} = \frac {\sigma_{1}^{2}} {\sigma_{1}^{2} - 2b_{2} \ell_{1}}.
\end{align}
\end{theorem}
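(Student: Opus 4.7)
The plan is to identify $\nu$ explicitly, after noting that existence iff $b_2 < 0$ follows from \lem{stability} with $k=2$ (so $b_{*} = b_2$), and that Harris irreducibility (\lem{irreducible}) together with positive recurrence (\lem{stability}) makes the stationary $\nu$ unique. I would first show $\nu$ admits a density $h$ that solves the adjoint ODE on each open level, and then pin down the remaining constants via (i) the reflection at $0$, (ii) a matching condition at $\ell_1$, and (iii) the normalization $\int h = 1$. Concretely, applying Ito's formula to $f(Z(t))$ for $f \in C_c^\infty((0, \ell_1))$ (valid since $\sigma, b$ enter only through $Z(u)$), taking $\E_\nu$, dividing by $t$, and using stationarity gives
\[
  \int_0^{\ell_1} \left(\tfrac{\sigma_1^2}{2} f'' + b_1 f'\right) h \, dx = 0,
\]
i.e., the distributional equation $\tfrac{\sigma_1^2}{2} h'' - b_1 h' = 0$ on $(0, \ell_1)$; by elliptic regularity $h$ is smooth there and $h(x) = A_1 + B_1 e^{\beta_1 x}$. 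The same reasoning gives $h(x) = A_2 + B_2 e^{\beta_2 (x - \ell_1)}$ on $(\ell_1, \infty)$, with $A_2 = 0$ forced by integrability since $\beta_2 < 0$.

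Next I would enlarge the test-function class to $f \in C^\infty$ with $\mathrm{supp}(f) \subset [0, \ell_1)$ but with $f(0), f'(0)$ free. Ito now contributes $f'(0) Y(t)$, and after IBP against the interior ODE the stationary identity collapses to a linear combination of $f(0)$ and $f'(0)$ equalling $0$; separating the two coefficients yields the zero-flux relation $b_1 h(0) = \tfrac{\sigma_1^2}{2} h'(0)$, which forces $A_1 = 0$ when $b_1 \neq 0$ (the parallel computation with $\beta_1 = 0$ collapses $h$ to a constant on $[0, \ell_1)$ when $b_1 = 0$), together with $y_0 = \tfrac{\sigma_1^2}{2} h(0)$, where $y_0 := \lim_{t \to \infty} t^{-1} \E_\nu[Y(t)]$. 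For the matching at $\ell_1$, I would take $f \in C^\infty$ compactly supported in $(0, \infty)$, split $\int_0^\infty (\mathcal{L} f) h \, dx = 0$ at $\ell_1$, and IBP on each side: the interior pieces vanish by the ODEs, the $f(\ell_1)$ coefficient collapses to $0 = 0$ through $\tfrac{\sigma_j^2}{2} \beta_j = b_j$, and the $f'(\ell_1)$ coefficient produces the jump identity
\[
  \sigma_1^2 h(\ell_1^-) = \sigma_2^2 h(\ell_1^+).
\]
The natural local-time interpretation, and the point at which Tanaka's formula enters essentially, is that both sides equal $\lim_{t \to \infty} t^{-1} \E_\nu[L^{\ell_1}(t)]$ computed from below and from above via the occupation-density representation.

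With $A_1 = A_2 = 0$, the jump condition reads $B_2 = (\sigma_1^2 / \sigma_2^2) B_1 e^{\beta_1 \ell_1}$, and $\int_0^\infty h = 1$ then fixes $B_1$. A short algebraic reduction using $\beta_j = 2 b_j / \sigma_j^2$ recovers the weights $d_{11}, d_{12}$ of \eq{d(b1not0)}, with a parallel reduction in the $b_1 = 0$ case yielding $d_{01}, d_{02}$ of \eq{d(b1=0)}; positivity of both $d$'s follows because $b_2 < 0$ and $b_1$ share a sign with $e^{-\beta_1 \ell_1} - 1$. The main obstacle is the matching step: because $\sigma$ and $b$ genuinely jump at $\ell_1$, the IBP across $\ell_1$ is not a classical manipulation and must be justified through the generalized Ito/Tanaka formula and the local-time/occupation-density machinery, which is precisely the technical toolkit the paper emphasizes. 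Invariance of the $h$ so produced (so that the $\nu$ identified is indeed the unique stationary distribution guaranteed by \lem{stability}) is then a converse Ito check along the same lines.
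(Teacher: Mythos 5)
Your route is correct in outline and it reaches the paper's formulas, but it is genuinely different from the paper's proof. The paper never writes a Fokker--Planck/adjoint ODE for the density: it works with the moment generating functions $\varphi_1,\varphi_2$ of the stationary law restricted to the two levels, applies the \emph{generalized} Ito formula to the kinked convex test function $f(x)=e^{\theta x}1(x<\ell_1)+e^{\theta\ell_1}1(x\ge\ell_1)$ (which is where the local time $L_{\ell_1}$ enters) and the ordinary Ito formula to $e^{\theta x}$, takes stationary expectations to get two linear identities in $\theta$ involving the unknowns $\dd{E}[Y(1)]$ and $\dd{E}[L_{\ell_1}(1)]$, and pins these down by the structural requirement that $\varphi_1$ be the mgf of a measure supported on $[0,\ell_1)$ (which forces $2\dd{E}[Y(1)]=e^{-\beta_1\ell_1}\dd{E}[L_{\ell_1}(1)]$) plus normalization. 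That route buys you the identification of $\dd{E}[Y(1)]$ and $\dd{E}[L_{\ell_1}(1)]$ and sidesteps any regularity discussion of $\nu$. Your route --- smooth test functions, piecewise adjoint ODE, zero-flux at $0$, transmission condition $\sigma_1^2h(\ell_1-)=\sigma_2^2h(\ell_1+)$, normalization --- is more classical and essentially re-derives Corollary~3.1 directly; the transmission condition you state is exactly what the paper's answer satisfies, and your local-time reading of it (both sides equal $\dd{E}[L_{\ell_1}(1)]$ in stationarity) is right, using the spatial continuity of $L_x(t)$ (the paper's Lemma~A.1).

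To make your argument complete you must close two points that the sketch glosses over. First, ``$\nu$ has a density'' is not automatic from hypoellipticity on the open levels: you must also exclude atoms at $0$ and at $\ell_1$. This is fixable inside your own framework --- e.g.\ in the stationary identity $\int \mathcal{L}f\,d\nu + f'(0)\dd{E}[Y(1)]=0$ an atom at $\ell_1$ (resp.\ $0$) contributes a term proportional to $f''(\ell_1)$ (resp.\ $f''(0)$), whose coefficient can be varied independently and hence must vanish; alternatively use the occupation-time formula with $\sigma^2\ge\sigma_{\min}^2>0$. Second, the interface step needs an actual justification: either carry out the integration by parts with smooth $f$ and match the coefficients of $f(\ell_1),f'(\ell_1),f''(\ell_1)$ separately (no local time needed, contrary to your worry --- Ito with smooth $f$ is pathwise valid even though $\sigma,b$ jump), or run your local-time argument, in which case you need continuity of $L_x(t)$ in $x$ at $\ell_1$ and a justified exchange of limit and expectation. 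Also a small slip: $e^{-\beta_1\ell_1}-1$ has the sign \emph{opposite} to $b_1$; positivity of $d_{11},d_{12}$ holds because $b_2(e^{-\beta_1\ell_1}-1)$ shares the sign of $b_1$ (using $b_2<0$). Finally, the closing ``converse Ito check'' is unnecessary: existence and uniqueness of $\nu$ come from Lemmas~2.3 and 2.4, and your conditions determine $h$ uniquely, so $\nu=h\,dx$ follows without a converse verification.
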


\begin{remark}
\label{rem:k=2}
(a) \eq{h(b1=0)} and \eq{d(b1=0)} are obtained from \eq{h(b1not0)} and \eq{d(b1not0)} by letting $b_{1} \to 0$.\\
(b) Assume that $Z(\cdot)$ is a stationary process, and define the moment generating functions (mgf for short):
\begin{align*}
 & \varphi(\theta) = \dd{E}[e^{\theta Z(1)}], \\
 & \varphi_{1}(\theta) = \dd{E}[e^{\theta Z(1)}1(0 \le Z(1) < \ell_{1})], \qquad \varphi_{2}(\theta) = \dd{E}[e^{\theta Z(1)}1(Z(1) \ge \ell_{1})].
\end{align*}
Here, $\varphi(\theta)$ and $\varphi_{2}(\theta)$ are finite for $\theta \le 0$, and $\varphi_{1}(\theta)$ does so for $\theta \in \dd{R}$. However, all of them are uniquely identified for $\theta \le 0$ as Laplace transforms. So, in what follows, we always assume that $\theta \le 0$ unless stated otherwise.

For $i=0,1$, let $\widehat{h}_{i1}$ and $\widehat{h}_{2}$ be the moment generating functions of $h_{i1}$ and $h_{2}$, respectively, then
\begin{align}
\label{eq:Lh1}
 & \widehat{h}_{i1}(\theta) = 
 \begin{cases}
\frac {e^{\theta \ell_{1}} - e^{-\beta_{1} \ell_{1}}} {\beta_{1} + \theta} \frac {\beta_{1}} {1 - e^{-\beta_{1} \ell_{1}}} 1(\theta \not= -\beta_{1}), \quad  & i = 1,\\
 \frac {e^{\theta \ell_{1}} - 1} {\ell_{1} \theta} 1(\theta \not= 0), & i = 0,
\end{cases}\\
\label{eq:Lh2}
 & \widehat{h}_{2}(\theta) = e^{\theta \ell_{1}} \frac {\beta_{2}}{\beta_{2} + \theta}, \qquad \theta \le 0,
\end{align}
where the singular points $\theta = - \beta_{1}, 0$ in \eq{Lh1} are negligible to determine $h_{i1}$, so we take the convention that $\widehat{h}_{i1}(\theta)$ exists for these $\theta$.

Hence, \eq{Z1-h} for $b_{1} \not= 0$ and \eq{Z0-h} for $b_{1} = 0$ are equivalent to
\begin{align}
\label{eq:phi-normalize}
 & \varphi_{1}(\theta)/\varphi_{1}(0) = \begin{cases}
\widehat{h}_{11}(\theta), & b_{1} \not= 0,\\
\widehat{h}_{01}(\theta), & b_{1} = 0,
\end{cases}, \qquad \varphi_{2}(\theta)/\varphi_{2}(0) = \widehat{h}_{2}(\theta),\\
\label{eq:d11}
 & \varphi_{1}(0) = d_{11}, \quad \varphi_{2}(0) = d_{12}, \; \mbox{ for } \; b_{1} \not= 0,\\
\label{eq:d01}
 & \varphi_{1}(0) = d_{01}, \quad \varphi_{2}(0) = d_{02}, \: \mbox{ for } \; b_{1} = 0.
\end{align}
Thus, \thr{k=2} is proved by showing these equalities. 
\end{remark}

\begin{remark}
\label{rem:compatibility}
\citet{Miya2024} conjectures that the diffusion scaled process limit of the queue length of the 2-level $GI/G/1$ queue in heavy traffic is the solution of the stochastic integral equation of (5.2) in \cite{Miya2024}. This stochastic equation corresponds to \eq{SIE-Z}, but $\ell_{1}$, $b_{i}$ and $\sigma_{i}$ of the present paper needs to replace by $\ell_{0}$, $-b_{i}$, $\sqrt{c_{i}} \sigma_{i}$ for $i=1,2$, respectively. Under these replacements, $\beta_{i}$ also needs to replace by $- 2b_{i}/(c_{i}\sigma_{i}^{2})$. Then, it follows from \eq{d(b1not0)}, \eq{d(b1=0)}, \eq{d11} and \eq{d01} that, under the setting of \citet{Miya2024}, for $b_{1} \not= 0$,
 \begin{align*}
  \varphi_{1}(0) = d_{11} = \frac {c_{1} b_{2} (e^{\beta_{1} \ell_{1}} - 1) } {c_{2} b_{1} + c_{1} b_{2} (e^{\beta_{1} \ell_{1}} - 1)}, \quad \varphi_{2}(0) = d_{12} = \frac {c_{2} b_{1}} {c_{2} b_{1} + c_{1} b_{2} (e^{\beta_{1} \ell_{1}} - 1)},
\end{align*}
and, for $b_{1} = 0$,
\begin{align*}
 & \varphi_{1}(0) = d_{01} = \frac {2b_{2} \ell_{1}} {c_{2}\sigma_{1}^{2} + 2b_{2} \ell_{1}}, \quad \varphi_{2}(0) = d_{02} = \frac {c_{2}\sigma_{1}^{2}} {c_{2}\sigma_{1}^{2} + 2b_{2} \ell_{1}}.
\end{align*}
Hence, the limiting distributions in (ii) of Theorem 3.1 of \cite{Miya2024} are identical with the stationary distributions in \thr{k=2} here. Note that the limiting distributions in \cite{Miya2024} are obtained under some extra conditions, which are not needed for \thr{k=2}.
\end{remark}

\subsection{Proof of \thr{k=2}}
\label{sec:proof-T31}

By \rem{k=2}, it is sufficient to show \eq{phi-normalize}, \eq{d11} and \eq{d01} for the proof of \thr{k=2}. We will do it  in three steps.

\subsubsection{1st step of the proof}
\label{sec:1st}

 In this subsection, we derive two stochastic equations from \eq{SIE-Z}. For this, we use the generalized Ito formulas for a continuous semi-martingale $X(\cdot)$ with finite quadratic variations $[X]_{t}$ for all $t \ge 0$. For a convex test function $f$, this Ito formula is given by
\begin{align}
\label{eq:G-Ito}
  f(X(t)) = f(X(0)) + \int_{0}^{t} f'(X(u)-) dX(u) + \frac 12 \int_{0}^{\infty} L_{x}(t) \mu_{f}(dx), \qquad t \ge 0,
\end{align}
where $L_{x}(t)$ is the local time of $X(\cdot)$ which is right-continuous in $x \in \dd{R}$, and $\mu_{f}$ on $(\dd{R}_{+}, \sr{B}(\dd{R}_{+}))$ is a measure on $(\dd{R},\sr{B}(\dd{R})$, defined by
\begin{align}
\label{eq:Mf}
  \mu_{f}([x,y)) = f'(y-) - f'(x-), \qquad x < y \mbox{ in } \dd{R},
\end{align}
where  $f'(x-)$ is the left derivative of $f$ at $x$. See \app{local-time} for the definition of local time and more about its connection to the generalized Ito formula \eq{G-Ito}.

Furthermore, if $f(x)$ is twice differentiable, then \eq{G-Ito} can be written as
\begin{align}
\label{eq:Ito}
  f(X(t)) = f(X(0)) + \int_{0}^{t} f'(X(u)) dX(u) + \frac 12 \int_{0}^{\infty} f''(X(u)) d[X]_{u}, \qquad t \ge 0,
\end{align}
which is well known Ito formula.

In our application of the generalized Ito formula, we first take the following convex function $f$ with parameter $\theta \le 0$ as a test function.
\begin{align}
\label{eq:SIE-test1}
  f(x) = e^{\theta x} 1(x < \ell_{1}) + e^{\theta \ell_{1}}1(x \ge \ell_{1}). \qquad x \in \dd{R}.
\end{align}
Since $f'(\ell_{1}+) = 0$ and $f'(\ell_{1}-) = \theta e^{\theta \ell_{1}}$, it follows from \eq{Mf} that
\begin{align*}
  \mu_{f}(\{\ell_{1}\}) = \lim_{\varepsilon \downarrow 0} f'((\ell_{1}+\varepsilon) -) - f'(\ell_{1}-) = f'(\ell_{1}+) - f'(\ell_{1}-) = - \theta e^{\theta \ell_{1}}.
\end{align*}
On the other hand, $f''(x) = \theta^{2} e^{\theta x}$ for $x < \ell_{1}$. Hence,
\begin{align*}
 & \int_{0}^{\infty} L_{x}(t) \mu_{f}(dx) = \int_{0}^{\infty} L_{x}(t) f''(x-) 1(x < \ell_{1}) dx + L_{\ell_{1}}(t) \mu_{f}(\{\ell_{1}\}).
\end{align*}
Then, applying local time characterization \eq{L-time1} to this formula, we have
\begin{align}
\label{eq:L-time-mu-f}
 & \int_{0}^{\infty} L_{x}(t) \mu_{f}(dx) = \theta^{2} \int_{0}^{t} e^{\theta Z(u)} 1(0 \le Z(u) < \ell_{1}) d[Z]_{u} - \theta e^{\theta \ell_{1}} L_{\ell_{1}}(t).
\end{align}

We next compute the quadratic variation $\qvs{Z}_{t}$ of $Z(\cdot)$. Define $M(\cdot) \equiv \{M(t); t \ge 0\}$ by
\begin{align}
\label{eq:Z-M}
  M(t) \equiv \int_{0}^{t} \sigma(Z(u)) dW(u), \qquad t \ge 0,
\end{align}
then $M(\cdot)$ is a martingale. Denote the quadratic variations of $Z(\cdot)$ and $M(\cdot)$, respectively, by $\qvs{Z}_{t}$ and $\qvs{M}_{t}$. Since $Z(t)$ and $Y(t)$ are continuous in $t$, it follows from \eq{SIE-Z} that
\begin{align}
\label{eq:M-qv}
  \qvs{Z}_{t} & = \qvs{M}_{t} = \int_{0}^{t} \sigma^{2}(Z(u)) du \nonumber\\
  & = \int_{0}^{t} \sigma_{1}^{2} 1(0 \le Z(u) < \ell_{1}) + \sigma_{2}^{2} 1(Z(u) \ge \ell_{1})] du,  \qquad t \ge 0.
\end{align}

Hence, from $f'(\theta) = \theta e^{\theta x} 1(x < \ell_{1})$, \eq{L-time-mu-f} and \eq{Z-M}, the generalized Ito formula \eq{G-Ito} becomes
\begin{align}
\label{eq:G-Ito1}
  f(Z(t)) & = f(Z(0)) + \int_{0}^{t} \theta e^{\theta Z(u)} 1(0 \le Z(u) < \ell_{1}) \sigma_{1} dW(u)  \nonumber\\
 & \quad + \int_{0}^{t} \Big(b_{1} \theta + \frac 12 \sigma^{2}_{1} \theta^{2}\Big) e^{\theta Z(u)} 1(0 \le Z(u) < \ell_{1}) du + \theta Y(t) \nonumber\\
 & \quad  - \frac 12 \theta e^{\theta \ell_{1}} L_{\ell_{1}}(t), \qquad t \ge 0, \theta \le 0.
\end{align}

We next applying Ito formula for the test function $f(x) = e^{\theta x}$ to \eq{SIE-Z}. In this case, we use Ito formula \eq{Ito} because $f(x)$ is twice continuously differentiable. Then, we have, for $\theta \le 0$,
\begin{align}
\label{eq:G-Ito2}
 f(Z(t)) & = f(Z(0)) + \int_{0}^{t} \theta e^{\theta Z(u)} \sigma(u) dW(u)  \nonumber\\
 & \quad + \int_{0}^{t} \Big(b(Z(u)) \theta + \frac 12 \sigma^{2}(Z(u)) \theta^{2}\Big) e^{\theta Z(u)} du + \theta Y(t).
\end{align}

\subsubsection{2nd step of the proof}
\label{sec:2nd}

The first statement of \thr{k=2} is immediate from \lem{stability}. Hence, under $b_{2} > 0$, we can assume that $Z(\cdot)$ is a stationary process by taking its stationary distribution for the distribution of $Z(0)$. In what follows, this is always assumed.

Recall the moment generating functions $\varphi, \varphi_{1}$ and $\varphi_{2}$, which are defined in \rem{k=2}. We first consider the stochastic integral equation \eq{G-Ito1} to compute $\varphi_{1}$. Since $\dd{E}[L_{\ell_{1}}(1)]$ is finite by \lem{LT-time}, taking the expectation of \eq{G-Ito1} for $t = 1$ and $\theta \le 0$ yields
\begin{align}
\label{eq:RBM-BAR1}
 & \frac 12 \sigma_{1}^{2} ( \beta_{1} + \theta) \varphi_{1}(\theta) - \frac 12 e^{\theta \ell_{1}} \dd{E}[L_{\ell_{1}}(1)] + \dd{E}[Y(1)] = 0,
\end{align}
because $\beta_{1} \sigma_{1}^{2} = 2 b_{1}$. Note that this equation implies that $\dd{E}[Y(1)]$ is also finite.

Using \eq{RBM-BAR1}, we consider $\varphi_{1}(\theta)$ separately for $b_{1} \not= 0$ and $b_{1} = 0$. First, assume that $b_{1} \not= 0$. Then, from \eq{RBM-BAR1} and $\beta_{1} > 0$, we have
\begin{align}
\label{eq:phi11}
 & \varphi_{1}(\theta) =  \frac {e^{\theta \ell_{1}} \dd{E}[L_{\ell_{1}}(1)] - 2 \dd{E}[Y(1)]}{\sigma_{1}^{2} \left(\beta_{1} + \theta\right)}, \qquad \theta \not= \beta_{1}.
\end{align}
This equation can be written as
\begin{align}
\label{eq:phi12}
 & \varphi_{1}(\theta) =  \frac {e^{-\beta_{1} \ell_{1}} \dd{E}[L_{\ell_{1}}(1)] - 2 \dd{E}[Y(1)]}{\sigma_{1}^{2} \left(\beta_{1} + \theta\right)} + \frac {(e^{\theta \ell_{1}} - e^{-\beta_{1} \ell_{1}})\dd{E}[L_{\ell_{1}}(1)]}{\sigma_{1}^{2}(\beta_{1} + \theta)} \frac {} {}.
\end{align}
Observe that the first term in the right-hand side of \eq{phi12} is proportional to the moment generating function (mgf) of the signed measure on $[0,\infty)$ whose density function is exponential while its second term is the mgf of a measure on $[0,\ell_{1}]$, but the left-hand side of \eq{phi12} is the mgf of a probability measure on $[0,\ell_{1})$. Hence, we must have
\begin{align}
\label{eq:L-time3}
  2 \dd{E}[Y(1)] = e^{-\beta_{1} \ell_{1}} \dd{E}[L_{\ell_{1}}(1)],
\end{align}
and therefore \eq{phi12} yields
\begin{align}
\label{eq:phi13}
 & \varphi_{1}(\theta) =  \widehat{h}_{11}(\theta) \frac {(1 - e^{-\beta_{1} \ell_{1}}) \dd{E}[L_{\ell_{1}}(1)]} {\beta_{1} \sigma_{1}^{2}}, \qquad \theta \le 0,
\end{align}
where $\widehat{h}_{11}(\theta)$ is defined in \eq{Lh1}, but also exists for $\theta = - \beta_{1}$ by our convention.

We next assume that $b_{1} = 0$. In this case, $\beta_{1} = 0$, and it follows from \eq{phi11} that
\begin{align}
\label{eq:phi14}
  \varphi_{1}(\theta) = \frac {e^{\theta \ell_{1}} - 1} {\ell_{1} \theta} \times \frac {\ell_{1} \dd{E}[L_{\ell_{1}}(1)]} {\sigma_{1}^{2}} + \frac {\dd{E}[L_{\ell_{1}}(1)] - 2\dd{E}[Y(1)]} {\sigma_{1}^{2} \theta}, \qquad \theta \le 0.
\end{align}
Since $\frac {e^{\theta \ell_{1}} - 1} {\ell_{1} \theta}$ is the mgf of the uniform distribution on $[0,\ell_{1})$, by the same reason as in the case for $b_{1} \not= 0$, we must have
\begin{align*}
  2\dd{E}[Y(1)] = \dd{E}[L_{\ell_{1}}(1)].
\end{align*}
Note that this equation is identical with \eq{L-time3} for $b_{1} = 0$. Furthermore, $\lim_{\theta \uparrow 0} \frac {e^{\theta \ell_{1}} - 1} {\ell_{1} \theta} = 1$ and $\lim_{\theta \uparrow 0} \varphi_{1}(\theta) = \varphi_{1}(0)$. Hence, \eq{phi14} implies that
\begin{align}
\label{eq:phi5}
  \varphi_{1}(\theta) = \widehat{h}_{01}(\theta) \frac {\ell_{1} \dd{E}[L_{\ell_{1}}(1)]} {\sigma_{1}^{2}}, \qquad \theta \le 0,
\end{align}
by our convention for $\widehat{h}_{01}(0)$ similar to $\widehat{h}_{11}(-\beta_{1})$. Thus, we have the following lemma.
\begin{lemma}
\label{lem:phi1}
The mgf $\varphi_{1}$ is obtained as
\begin{align}
\label{eq:phi1}
  \varphi_{1}(\theta) = \begin{cases}
 \ds \widehat{h}_{11}(\theta) \frac {(1 - e^{\beta_{1} \ell_{1}}) \dd{E}[L_{\ell_{1}}(1)]} {\beta_{1} \sigma_{1}^{2}}, \quad & b_{1} \not= 0,  \medskip\\
 \ds \widehat{h}_{01}(\theta) \frac {\ell_{1} \dd{E}[L_{\ell_{1}}(1)]} {\sigma_{1}^{2}}, \quad &  b_{1} = 0,
\end{cases}
\qquad \theta \le 0.
\end{align}
\end{lemma}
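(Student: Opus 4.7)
The plan is to apply the generalized Ito expansion \eq{G-Ito1} at $t = 1$, take expectations under the stationary distribution, and then exploit the fact that the measure defining $\varphi_1$ is supported on the \emph{bounded} interval $[0,\ell_1)$ to pin down the unknown constants. The piecewise exponential test function $f(x) = e^{\theta x} 1(x < \ell_1) + e^{\theta \ell_1} 1(x \ge \ell_1)$ is chosen precisely so that the drift and quadratic-variation integrals in \eq{G-Ito1} collapse to a single multiple of $\varphi_1(\theta)$, leaving a scalar relation among $\varphi_1(\theta)$, $\dd{E}[Y(1)]$, and $\dd{E}[L_{\ell_1}(1)]$.

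First I would take expectations in \eq{G-Ito1}. The Brownian integral is a true martingale with zero mean since its integrand is bounded (it is supported on $\{Z(u) < \ell_1\}$), the expected local time $\dd{E}[L_{\ell_1}(1)]$ is finite by \lem{LT-time}, and by stationarity $\dd{E}[f(Z(1))] = \dd{E}[f(Z(0))]$ cancels. Using $\beta_1 \sigma_1^2 = 2 b_1$, this collapses to the scalar identity \eq{RBM-BAR1}, which ties the three unknowns together for every $\theta \le 0$ and, as a by-product, shows that $\dd{E}[Y(1)]$ is finite.

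Next I would solve \eq{RBM-BAR1} for $\varphi_1(\theta)$ separately in the two regimes. If $b_1 \neq 0$, the solution \eq{phi11} is a rational function in $\theta$ with a simple pole at $\theta = -\beta_1$. The crucial observation is that $\varphi_1$ is the Laplace transform of a finite measure supported on the bounded interval $[0,\ell_1)$, hence analytic on all of $\dd{R}$, so the apparent pole must be removable. Splitting the numerator as in \eq{phi12} isolates the singular part into a single exponential summand, whose residue at $\theta = -\beta_1$ must vanish; this forces the identity \eq{L-time3}, namely $2\dd{E}[Y(1)] = e^{-\beta_1 \ell_1} \dd{E}[L_{\ell_1}(1)]$. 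Substituting back and recognizing the remaining factor as $\widehat{h}_{11}(\theta)$ of \eq{Lh1} yields the first branch of \eq{phi1}.

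The case $b_1 = 0$ proceeds identically except that now $\beta_1 = 0$ and the simple pole in \eq{phi14} sits at $\theta = 0$; removability (using $\lim_{\theta \uparrow 0} \varphi_1(\theta) = \varphi_1(0)$ finite and $\lim_{\theta \uparrow 0}(e^{\theta \ell_1} - 1)/(\ell_1 \theta) = 1$) forces $2\dd{E}[Y(1)] = \dd{E}[L_{\ell_1}(1)]$, which is the $b_1 \to 0$ limit of \eq{L-time3}, and after substitution the remaining factor is $\widehat{h}_{01}(\theta)$. The main delicate point throughout is the removable-pole argument: everything hinges on $\varphi_1$ being genuinely finite (in fact entire) at the offending value of $\theta$, which is ensured solely by the boundedness of the support of the measure underlying $\varphi_1$. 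Once this is granted, matching the singular and regular parts of the rational expression simultaneously determines $\dd{E}[Y(1)]$ in terms of $\dd{E}[L_{\ell_1}(1)]$ and pins down the normalization of $\varphi_1$.
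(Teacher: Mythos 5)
Your proposal is correct and follows essentially the same route as the paper: take expectations in \eq{G-Ito1} at $t=1$ under stationarity to obtain \eq{RBM-BAR1}, deduce $2\dd{E}[Y(1)] = e^{-\beta_{1}\ell_{1}}\dd{E}[L_{\ell_{1}}(1)]$, and then read off the factors $\widehat{h}_{11}(\theta)$ and $\widehat{h}_{01}(\theta)$. The only (minor) difference is how \eq{L-time3} is justified: you use a removable-singularity argument based on $\varphi_{1}$ being entire because its underlying measure has bounded support, while the paper matches the supports of the transforms in \eq{phi12}; both arguments rest on exactly the same fact and are valid.
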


We next consider the stochastic integral equation \eq{G-Ito2} to derive $\varphi_{2}(\theta)$. In this case, we use \eq{G-Ito2}. Note that $\varphi_{1}(\theta)$ and $\varphi_{2}(\theta)$ are finite for $\theta \le 0$. Hence, taking the expectations of both sides of \eq{G-Ito2} for $t=1$ and $\theta \le 0$ yields
\begin{align}
\label{eq:RBM-BAR2}
  \frac 12 \sum_{i=1,2} \sigma_{i}^{2} \left(\beta_{i} + \theta \right) \varphi_{i}(\theta) + \dd{E}[Y(1)] = 0, \qquad \theta \le 0.
\end{align}
Substituting $\frac 12 \sigma_{1}^{2} \left(\beta_{1} + \theta \right)$ of \eq{RBM-BAR1} and $\dd{E}[Y(1)]$ of \eq{L-time3} into this equation, we have 
\begin{align}
\label{eq:RBM-BAR3}
 & \sigma_{2}^{2} \left(\beta_{2} + \theta \right) \varphi_{2}(\theta) + e^{\theta \ell_{1}} \dd{E}[L_{\ell_{1}}(1)] = 0, \qquad \theta \le 0.
\end{align}
The following lemma is immediate from this equation since $\beta_{2} < 0$.

\begin{lemma}
\label{lem:phi2}
The mgf $\varphi_{2}$ is obtained as
\begin{align}
\label{eq:phi21}
 & \varphi_{2}(\theta) = \widehat{h}_{2}(\theta) \frac {\dd{E}[L_{\ell_{1}}(1)]}{-\beta_{2} \sigma_{2}^{2}}, \qquad \theta \le 0,
\end{align}
where recall that $\widehat{h}_{2}$ is defined by \eq{Lh2}.
\end{lemma}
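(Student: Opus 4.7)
The plan is to derive \eq{phi21} directly from the stochastic balance equation \eq{RBM-BAR3}, which was established one step earlier. Since \eq{RBM-BAR3} already involves only the single unknown $\varphi_{2}(\theta)$ (the quantities $\dd{E}[L_{\ell_{1}}(1)]$ and $\ell_{1}$ are constants independent of $\theta$), the remaining work is purely algebraic: isolate $\varphi_{2}(\theta)$ and match the resulting expression against the formula \eq{Lh2} for $\widehat{h}_{2}$.

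First I would check that the prefactor $\sigma_{2}^{2}(\beta_{2} + \theta)$ does not vanish on the range of interest. Since stability has been imposed, $b_{2} < 0$ and thus $\beta_{2} = 2 b_{2}/\sigma_{2}^{2} < 0$; for any $\theta \le 0$ this gives $\beta_{2} + \theta < 0$, so division by this factor is legitimate for all such $\theta$. Rearranging \eq{RBM-BAR3} then yields
\begin{align*}
  \varphi_{2}(\theta) = \frac{e^{\theta \ell_{1}} \dd{E}[L_{\ell_{1}}(1)]}{-\sigma_{2}^{2}(\beta_{2} + \theta)}, \qquad \theta \le 0.
\end{align*}
Pulling a factor of $\beta_{2}$ into the numerator to recognize the Laplace transform $\widehat{h}_{2}(\theta) = e^{\theta \ell_{1}} \beta_{2}/(\beta_{2} + \theta)$ gives the claimed expression \eq{phi21}.

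There is no real obstacle here, as the lemma is a one-line consequence of \eq{RBM-BAR3}. The only subtle point, which I would flag explicitly, is the sign bookkeeping: the factor $-\beta_{2}\sigma_{2}^{2}$ in the denominator of \eq{phi21} is positive because $\beta_{2} < 0$, which is consistent with $\varphi_{2}(\theta)$ and $\dd{E}[L_{\ell_{1}}(1)]$ both being nonnegative. The argument tacitly uses that $\varphi_{2}(\theta)$ is finite for $\theta \le 0$ (by dominated convergence, since $Z(1) \ge \ell_{1}$ on the event defining $\varphi_{2}$ and $e^{\theta Z(1)} \le 1$), which was already noted in \rem{k=2} and used to take expectations in \eq{RBM-BAR2}.
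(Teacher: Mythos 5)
Your proposal is correct and matches the paper's own argument: the paper also derives \eq{phi21} as an immediate consequence of \eq{RBM-BAR3}, using $\beta_{2}<0$ so that $\beta_{2}+\theta<0$ for $\theta\le 0$ and the division is legitimate. You have simply written out the one-line algebra (and the sign/finiteness bookkeeping) that the paper leaves implicit.
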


\subsubsection{3rd step of the proof}
\label{sec:3rd}

We now prove \eq{phi-normalize}, \eq{d11} and \eq{d01}. Since $\widehat{h}_{11}(0) = \widehat{h}_{01}(0) = \widehat{h}_{2}(0) = 1$, \eq{phi-normalize} is immediate from Lemmas \lemt{phi1} and \lemt{phi2}. To prove \eq{d11}, assumed that $b_{1} \not = 0$. In this case, from \eq{phi13} and \eq{phi21}, we have
\begin{align*}
 & \varphi_{1}(0) = \frac {(1 - e^{-\beta_{1} \ell_{1}}) } {\beta_{1} \sigma_{1}^{2}} \dd{E}[L_{\ell_{1}}(1)],\quad
 \varphi_{2}(0) = \frac {\dd{E}[L_{\ell_{1}}(1)]}{-\beta_{2} \sigma_{2}^{2}}.
\end{align*}
Taking the ratios of both sides, we have
\begin{align*}
  \frac {\varphi_{1}(0)} {\varphi_{2}(0)} = \frac {-\beta_{2} \sigma_{2}^{2} (1 - e^{-\beta_{1} \ell_{1}}) } {\beta_{1} \sigma_{1}^{2}}.
\end{align*}
Since $\varphi_{1}(0) + \varphi_{2}(0) = 1$, this and $\beta_{i} \sigma_{i}^{2} = 2 b_{i}$ yield
\begin{align*}
 & \varphi_{1}(0) = \frac {\beta_{2} \sigma_{2}^{2} (e^{-\beta_{1} \ell_{1}} - 1) } {\beta_{1} \sigma_{1}^{2} + \beta_{2} \sigma_{2}^{2} (e^{-\beta_{1} \ell_{1}} - 1)} = \frac {b_{2} (e^{-\beta_{1} \ell_{1}} - 1) } {b_{1} + b_{2} (e^{-\beta_{1} \ell_{1}} - 1)} = d_{11},\\
 & \varphi_{2}(0) = \frac {\beta_{1} \sigma_{1}^{2}} {\beta_{1} \sigma_{1}^{2} + \beta_{2} \sigma_{2}^{2} (e^{-\beta_{1} \ell_{1}} - 1)} = \frac {b_{1}} {b_{1} + b_{2} (e^{-\beta_{1} \ell_{1}} - 1)} = d_{12}.
\end{align*}
This proves \eq{d11}. We next assume that $b_{1} = 0$, then it follows from \eq{phi5} and \eq{phi21} that
\begin{align*}
  \frac {\varphi_{1}(0)} {\varphi_{2}(0)} = \frac {-\beta_{2} \sigma_{2}^{2} \ell_{1}} {\sigma_{1}^{2}}.
\end{align*}
Similarly to the case for $b_{1} \not= 0$, this yields
\begin{align}
\label{eq:phi(0)-0}
 & \varphi_{1}(0) = \frac {-\beta_{2} \sigma_{2}^{2}  \ell_{1}} {\sigma_{1}^{2} - \beta_{2} \sigma_{2}^{2} \ell_{1}} = \frac {-2b_{2}  \ell_{1}} {\sigma_{1}^{2} - 2b_{2} \ell_{1}} = d_{01},\\
 & \varphi_{2}(0) = \frac {\sigma_{1}^{2}} {\sigma_{1}^{2} - \beta_{2} \sigma_{2}^{2} \ell_{1}} = \frac {\sigma_{1}^{2}} {\sigma_{1}^{2} - 2b_{2} \ell_{1}} = d_{02},
\end{align}
This proves \eq{d01}. Thus, the proof of \thr{k=2} is completed.

\subsection{Stationary distribution for general $k$}
\label{sec:general k}

We now derive the stationary distribution of the one-dimensional $k$-level SRBM for a general positive integer $k$. Recall the definition of $\beta_{j}$, and define $\eta_{j}$ as
\begin{align*}
  \beta_{j} = 2 b_{j}/\sigma_{j}^{2}, \; j \in \sr{N}_{k}, \qquad \eta_{0} = 1, \; \eta_{j} = \prod_{i=1}^{j} e^{\beta_{i}(\ell_{i} - \ell_{i-1}^{+})}, \; j \in \sr{N}_{k-1}, \; \eta_{k} = 0,
\end{align*}
where $x^{+} = 0 \vee x \equiv \max(0,x)$ for $x \in \dd{R}$. Also recall that the state space $S$ is partitioned to $S_{j}$ defined in \eq{partition} for $j \in \sr{N}_{k}$.

\begin{theorem}[The case for general $k \ge 2$]
\label{thr:general k}
The $Z(\cdot)$ of the one-dimensional $k$-level SRBM has a stationary distribution if and only if $b_{k} < 0$, equivalently, $\beta_{k} < 0$. Let $J = \{i \in \sr{N}_{k}; b_{i} = 0\}$, and assume that $b_{k} < 0$, then denote the stationary distribution of $Z(t)$ by $\nu$, then $\nu$ is unique and has a probability density function $h^{J}$ for  which is given below.\\
 (i) If $J = \emptyset$, that is, $b_{j} \not= 0$ for all $j \in \sr{N}_{k}$, then 
\begin{align}
\label{eq:Z-h}
  h^{\emptyset}(x) = h(x) \equiv \sum_{j=1}^{k} d_{j} h_{j}(x), \qquad x \ge 0,
\end{align}
where $h_{j}$ for $j \in \sr{N}_{k}$ are probability density functions defined as
\begin{align}
\label{eq:hj}
 & h_{j}(x) = \begin{cases}
 \ds \frac { \beta_{j} e^{\beta_{j} (x - \ell_{j-1}^{+})}} {e^{\beta_{j} (\ell_{j} - \ell_{j-1}^{+})} - 1}1(x \in S_{j}), \qquad & j \in \sr{N}_{k-1}, \\
 -\beta_{k} e^{\beta_{k} (x - \ell_{k-1})} 1(x \in S_{k}), & j = k, 
\end{cases}
\end{align}
and $d_{j}$ for $j \in \sr{N}_{k}$ are positive constants defined as
\begin{align}
\label{eq:dj}
 & d_{j} = 
 \frac {b_{j}^{-1}(\eta_{j} - \eta_{j-1})} {\sum_{i=1}^{k-1} b_{i}^{-1}(\eta_{i} - \eta_{i-1}) - b_{k}^{-1} \eta_{k-1}}, \qquad j \in \sr{N}_{k}.
\end{align}
(ii) If $J \not= \emptyset$, that is, $b_{i} = 0$ for some $i \in J$, then
\begin{align}
\label{eq:Z-hJ}
  h^{J}(x) = \sum_{j=1}^{k} d^{J}_{j} h^{J}_{j}(x), \qquad x \ge 0,
\end{align}
where $\ds h^{J}_{j}(x) = \lim_{b_{i} \to 0, i \in J} h_{j}(x)$ and $\ds d^{J}_{j} = \lim_{b_{i} \to 0, i \in J} d_{j}$ for $j \in \sr{N}_{k}$.
\end{theorem}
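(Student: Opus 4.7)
The plan is to extend the three-step strategy of \sectn{proof-T31} from $k=2$ to general $k$ by using a parametric family of convex test functions, one per interior threshold. The existence half of the theorem is immediate from \lem{stability}. Assume now that $b_{k} < 0$ and that $Z(\cdot)$ is stationary with distribution $\nu$; define $\varphi_{j}(\theta) = \dd{E}[e^{\theta Z(1)} 1(Z(1) \in S_{j})]$ for $j \in \sr{N}_{k}$ and $\theta \le 0$.

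For each $j \in \sr{N}_{k-1}$, I would apply the generalized Ito formula \eq{G-Ito} to the convex test function
\begin{align*}
  f_{j}(x) = e^{\theta x} 1(x < \ell_{j}) + e^{\theta \ell_{j}} 1(x \ge \ell_{j}),
\end{align*}
whose only kink contributes $\mu_{f_{j}}(\{\ell_{j}\}) = -\theta e^{\theta \ell_{j}}$ to the local-time term, and whose second derivative on $\{x < \ell_{j}\}$ is $\theta^{2} e^{\theta x}$. Proceeding exactly as in the derivation of \eq{G-Ito1} and taking expectations at $t=1$ under stationarity yields
\begin{align*}
  \sum_{i=1}^{j} \sigma_{i}^{2} (\beta_{i} + \theta) \varphi_{i}(\theta) + 2 \dd{E}[Y(1)] - e^{\theta \ell_{j}} \dd{E}[L_{\ell_{j}}(1)] = 0,
\end{align*}
while the smooth test function $f(x) = e^{\theta x}$ used in \eq{G-Ito2} supplies the same identity for the formal value $j = k$ with $\dd{E}[L_{\ell_{k}}(1)]$ understood as $0$. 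Subtracting the equation for $j-1$ from the one for $j$ isolates each level:
\begin{align*}
  \sigma_{j}^{2}(\beta_{j} + \theta) \varphi_{j}(\theta) = e^{\theta \ell_{j}} \dd{E}[L_{\ell_{j}}(1)] - e^{\theta \ell_{j-1}} \dd{E}[L_{\ell_{j-1}}(1)], \qquad j = 2, \ldots, k,
\end{align*}
where for $j=1$ the second term on the right is replaced by $2 \dd{E}[Y(1)]$ and for $j=k$ the first term vanishes.

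Next, modelled on the pole-removal step of \sectn{proof-T31}, since $\varphi_{j}$ is the moment generating function of a finite measure supported on the bounded set $\ol{S_{j}}$ for $j \in \sr{N}_{k-1}$, it extends to an entire function of $\theta$; hence, whenever $b_{j} \ne 0$, the apparent pole at $\theta = -\beta_{j}$ on the right-hand side must be removable. This forces the recursion
\begin{align*}
  \dd{E}[L_{\ell_{j}}(1)] = e^{\beta_{j}(\ell_{j} - \ell_{j-1})} \dd{E}[L_{\ell_{j-1}}(1)], \quad j = 2, \ldots, k-1, \qquad 2 \dd{E}[Y(1)] = e^{-\beta_{1} \ell_{1}} \dd{E}[L_{\ell_{1}}(1)],
\end{align*}
which expresses every $\dd{E}[L_{\ell_{j}}(1)]$ as a scalar multiple of $\dd{E}[L_{\ell_{1}}(1)]$ precisely in terms of the products $\eta_{j}$. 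Substituting back, $\varphi_{j}(\theta)/\varphi_{j}(0)$ equals the mgf $\widehat{h}_{j}(\theta)$ of the claimed density $h_{j}$ from \eq{hj}, identified uniquely by Laplace inversion as in \rem{k=2}. Each weight takes the common form $\varphi_{j}(0) \propto b_{j}^{-1}(\eta_{j} - \eta_{j-1})$ with $\eta_{0} = 1$ and $\eta_{k} = 0$, and the normalization $\sum_{j} \varphi_{j}(0) = 1$ fixes the proportionality constant, producing formula \eq{dj} for $d_{j}$.

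Case (ii), where some $b_{i} = 0$, follows by continuity: both the densities and the coefficients in (i) are jointly continuous in $(b_{1}, \ldots, b_{k})$ under the removable-singularity conventions of \rem{k=2}, so letting $b_{i} \to 0$ for $i \in J$ along non-vanishing values yields (ii); alternatively, one repeats the argument at each degenerate level using $\lim_{\theta \to 0}$, exactly as in the $b_{1} = 0$ subcase of \thr{k=2}. I expect the principal technical obstacle to be the pole-removal step at each interior $\theta = -\beta_{j}$, which requires invoking both the compactly-supported structure of $\varphi_{j}$ on $\ol{S_{j}}$ and the finiteness $\dd{E}[L_{\ell_{j}}(1)] < \infty$ for every interior boundary (the latter paralleling the $k=2$ argument and ultimately resting on \lem{LT-time}); the remaining combinatorial bookkeeping to recover the $\eta_{j}$-form of $d_{j}$ from the telescoping recursion is routine.
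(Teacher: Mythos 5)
Your proposal is correct and follows essentially the same route as the paper: the same truncated-exponential convex test functions $f_j$ at each interior threshold together with the smooth $e^{\theta x}$, the same telescoping to isolate $\sigma_j^2(\beta_j+\theta)\varphi_j(\theta)$ in terms of $\dd{E}[L_{\ell_j}(1)]$ and $\dd{E}[L_{\ell_{j-1}}(1)]$, the same elimination of the unwanted term at $\theta=-\beta_j$ (the paper phrases it via the support of the measure rather than removable singularities, but it is the same fact), the same recursion $\dd{E}[L_{\ell_j}(1)]=2\dd{E}[Y(1)]\eta_j$ with normalization giving \eq{dj}, and the same limiting argument for case (ii).
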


Before proving this theorem in \sectn{proof-general k}, we note that the density $h^{\emptyset}$ has a simple expression, which is further discussed in \sectn{weaker conditions}.

\begin{corollary}\rm
\label{cor:general k}
Under the assumptions of \thr{general k}, the probability density function $h^{\emptyset}$ of the stationary distribution of the $k$-level $SRBM$ on $\dd{R}_{+}$ when $b(x) \not= 0$ for all $x \ge 0$  is given by
\begin{align}
\label{eq:hx}
  h^{\emptyset}(x) = \frac {1}{C_{k} \sigma^{2}(x)} \exp\left(\int_{0}^{x} \frac {2b(y)}{\sigma^{2}(y)} dy\right), \qquad x \ge 0.
\end{align}
where
\begin{align}
\label{eq:Ck}
  C_{k} = \int_{0}^{\infty} \frac {1}{\sigma^{2}(x)} \exp\left(\int_{0}^{x} \frac {2b(y)}{\sigma^{2}(y)} dy\right) dx.
\end{align}
 \end{corollary}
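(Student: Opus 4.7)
The plan is to verify directly that the piecewise expression in \thr{general k} under $J=\emptyset$ coincides with the right-hand side of \eq{hx}. Since both $b(\cdot)$ and $\sigma(\cdot)$ are piecewise constant by \cond{multi-level}, and $h^\emptyset$ is defined level by level, the strategy is to evaluate \eq{hx} on each $S_j$ and match the coefficients with $d_j h_j$.

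First, I would compute the inner integral of \eq{hx} on each level. For $x\in S_j$ the integrand $2b(y)/\sigma^2(y)$ is the step function taking value $\beta_i$ on $S_i$. Summing the contributions gives
\begin{align*}
  \int_0^x \frac{2b(y)}{\sigma^2(y)}\,dy \;=\; \sum_{i=1}^{j-1} \beta_i(\ell_i - \ell_{i-1}^+) \;+\; \beta_j(x - \ell_{j-1}^+), \qquad x\in S_j,
\end{align*}
so that, by the definition of $\eta_{j-1}$, the exponential in \eq{hx} equals $\eta_{j-1} e^{\beta_j(x-\ell_{j-1}^+)}$. Hence the proposed density on $S_j$ is $\eta_{j-1} e^{\beta_j(x-\ell_{j-1}^+)}/(C_k \sigma_j^2)$.

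Second, I would compute $C_k$ by splitting the defining integral in \eq{Ck} over the $k$ levels. On $S_j$ with $j\le k-1$, using $e^{\beta_j(\ell_j-\ell_{j-1}^+)} = \eta_j/\eta_{j-1}$ and $\beta_j\sigma_j^2 = 2b_j$, the level contribution simplifies to $(\eta_j-\eta_{j-1})/(2b_j)$; on $S_k$, since $\beta_k<0$, the tail integral gives $-\eta_{k-1}/(2b_k)$. Summing and using $\eta_k=0$ yields
\begin{align*}
  C_k \;=\; \tfrac12\Bigl(\,\sum_{i=1}^{k-1} b_i^{-1}(\eta_i-\eta_{i-1}) - b_k^{-1}\eta_{k-1}\Bigr),
\end{align*}
which is exactly half the common denominator appearing in \eq{dj}.

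Third, I would match coefficients. Using $\beta_j = 2b_j/\sigma_j^2$ together with $e^{\beta_j(\ell_j-\ell_{j-1}^+)} - 1 = (\eta_j-\eta_{j-1})/\eta_{j-1}$ for $j\le k-1$, the factor in $h_j$ of \eq{hj} rearranges so that $d_j h_j(x) = \eta_{j-1} e^{\beta_j(x-\ell_{j-1}^+)}/(C_k\sigma_j^2)$, in agreement with step~1. The case $j=k$ works analogously: $d_k h_k(x) = \eta_{k-1} e^{\beta_k(x-\ell_{k-1})}/(C_k\sigma_k^2)$, using $\eta_k=0$. This completes the identification of \eq{Z-h} with \eq{hx}.

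This is really a bookkeeping calculation rather than a result with a delicate obstacle; the only point requiring care is the convention $\ell_0=-\infty$ with $\ell_0^+ = 0$ in the base-level contribution, and the sign of the $S_k$-contribution which is controlled by the stability assumption $\beta_k<0$ from \lem{stability}.
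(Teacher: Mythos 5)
Your proposal is correct and follows essentially the same route as the paper: rewrite $\eta_{j-1}$ as $\exp\bigl(\int_0^{\ell_{j-1}^+} 2b(y)/\sigma^2(y)\,dy\bigr)$, use $\beta_j\sigma_j^2=2b_j$ and $e^{\beta_j(\ell_j-\ell_{j-1}^+)}-1=(\eta_j-\eta_{j-1})/\eta_{j-1}$ to show $d_jh_j(x)=\eta_{j-1}e^{\beta_j(x-\ell_{j-1}^+)}/(C_k\sigma_j^2)$ on each level, with the normalizing constant equal to half the denominator of \eq{dj}. The only cosmetic difference is that you verify the integral in \eq{Ck} levelwise, whereas the paper sets $C_k=C/2$ with $C$ the denominator of \eq{dj} and leaves that identification implicit; both are the same bookkeeping.
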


\begin{proof}
Let $C = \sum_{i=1}^{k} b_{i}^{-1}(\eta_{i} - \eta_{i-1})$, which is finite, and write $\eta_{j}$ for $j \in \sr{N}_{k-1}$ as
\begin{align*}
  \eta_{j} = \exp\left(\sum_{i=1}^{j} \beta_{i} (\ell_{i} - \ell^{+}_{i-1}) \right) = \exp\left(\int_{0}^{\ell_{j}} \frac {2b(y)}{\sigma^{2}(y)} dy \right),
\end{align*}
Then, from (i) of \thr{general k}, we have, for $x \in [\ell_{j-1}^{+}, \ell_{j})$ with $j \le k-1$.
\begin{align}
\label{eq:dj-k-1}
  d_{j} h_{j}(x) & = \frac {1}{C b_{j}} (\eta_{j} - \eta_{j-1}) \frac {\beta_{j} e^{\beta_{j} (x - \ell_{j-1}^{+})}} {e^{\beta_{j} (\ell_{j} - \ell_{j-1}^{+})}-1} \nonumber\\
  & = \frac {1}{C b_{j}} (\eta_{j} - \eta_{j-1}) \frac {e^{\beta_{j} (x - \ell_{j-1}^{+})} \eta_{j-1}} {\eta_{j} - \eta_{j-1}} \beta_{j} = \frac 2{C \sigma^{2}(x)} \exp\left(\int_{0}^{x} \frac {2b(y)}{\sigma^{2}(y)} dy\right),
\end{align}
because $\beta_{j}/b_{j} = 2/\sigma_{j}^{2} = 2/\sigma^{2}(x)$ for $x \in [\ell_{j-1}, \ell_{j})$. Similarly, for $x \ge \ell_{k-1}$,
\begin{align}
\label{eq:dj-k}
  d_{k} h_{k}(x) = \frac {\eta_{k-1}} {Cb_{k}} (-\beta_{k}) e^{\beta_{k} (x - \ell_{k-1})} = \frac 2{C \sigma^{2}(x)} \exp\left(\int_{0}^{x} \frac {2b(y)}{\sigma^{2}(y)} dy\right).
\end{align}
Hence, putting $C_{k} = C/2$, we have \eq{hx}.
\end{proof}

\begin{remark}\rm
\label{rem:general k}
$d_{j}$ defined by \eq{dj} must be positive, which is easily checked. Nevertheless, it is interesting that their positivity is visible through \eq{dj-k-1} and \eq{dj-k} of this corollary.
\end{remark}

\subsection{Proof of \thr{general k}}
\label{sec:proof-general k}

Similar to the proof of \thr{k=2}, the first statement is immediate from \lem{stability}, and we can assume that $Z(\cdot)$ is a stationary process since $b_{k} < 0$. We also always assume that $\theta \le 0$. Define moment generating functions (mgf):
\begin{align*}
  \varphi_{j}(\theta) = \dd{E}[e^{\theta Z(0)} 1(Z(0) \in S_{j})], \qquad j \in \sr{N}_{k},
\end{align*}
which are obviously finite because $\theta \le 0$. Then, the mgf $\varphi(\theta)$ of $Z(0)$ is expressed as
\begin{align*}
  \varphi(\theta) = \sum_{j=1}^{k} \varphi_{j}(\theta), \qquad \theta \le 0,
\end{align*}
and $d_{j} = \varphi_{j}(0)$ for $j \in \sr{N}_{k}$. 

We first prove (i). In this case, let $\widehat{h}_{j}$ be the mgf of $h_{j}$ for $j \in \sr{N}_{k}$, then
\begin{align}
\label{eq:mgf-hj}
 & \widehat{h}_{j}(\theta) = \begin{cases}
\ds \frac {\beta_{j}}{\beta_{j} + \theta} \frac {e^{(\theta + \beta_{j})(\ell_{j} - \ell_{j-1}^{+})} - 1} {e^{\beta_{j}(\ell_{j} - \ell_{j-1}^{+})} - 1} e^{\beta_{j} \ell_{j}}, \quad & j \in \sr{N}_{k-1}\\
\ds \frac {\beta_{k}} {\beta_{k} + \theta} e^{\theta \ell_{k-1}}, & j = k.
\end{cases}
\end{align}
Hence, \eq{Z-h} is obtained if we show that, for $j \in \sr{N}_{k}$,
\begin{align}
\label{eq:phi-j}
 & \varphi_{j}(\theta)/\varphi_{j}(0) = \widehat{h}_{j}(\theta), \\
\label{eq:d-j}
 & \varphi_{j}(0) = d_{j}.
\end{align}

To prove \eq{phi-j} and \eq{d-j}, we use the following convex function $f_{j}$ with parameter $\theta \le 0$ as a test function for the generalized Ito formula similar to \eq{SIE-test1}.
\begin{align}
\label{eq:SIE-test-j}
  f_{j}(x) = e^{\theta x} 1(x \le \ell_{j}) + e^{\theta \ell_{j}}1(x > \ell_{j}). \qquad x \in \dd{R}, \; j \in \sr{N}_{k-1}.
\end{align}
Since $f_{j}'(\ell_{j}-) = \theta e^{\theta \ell_{j}}$ and $f_{j}'(\ell_{1}+) = 0$, it follows from \eq{Mf} that
\begin{align*}
  \mu_{f}(\{\ell_{j}\}) = \lim_{\varepsilon \downarrow 0} f'((\ell_{j}+\varepsilon) -) - f'(\ell_{j}-) = f'(\ell_{j}+) - f'(\ell_{j}-) = - \theta e^{\theta \ell_{j}},
\end{align*}
and, $f''(x) = \theta^{2} e^{\theta x}$ for $x < \ell_{j}$. Hence, similarly to \eq{G-Ito1}, the generalized Ito formula \eq{G-Ito} for $f = f_{j}$ becomes
\begin{align}
\label{eq:G-Ito-j}
 & f_{j}(Z(t)) = f_{j}(Z(0)) + \int_{0}^{t} \theta e^{\theta Z(u)} 1(0 \le Z(u) < \ell_{j}) \sigma(Z(u)) dW(u)  \nonumber\\
 & \quad + \int_{0}^{t} \Big(b(Z(u)) \theta + \frac 12 \sigma^{2}(Z(u)) \theta^{2}\Big) e^{\theta Z(u)} 1(0 \le Z(u) < \ell_{j}) du \nonumber\\
 & \quad  - \frac 12 \theta e^{\theta \ell_{j}} L_{\ell_{j}}(t) + \theta Y(t), \qquad t \ge 0, \; \theta \le 0, \; j \in \sr{N}_{k-1}.
\end{align}

Similarly to the proof of \thr{k=2}, we next apply Ito formula for test function $f(x) = e^{\theta x}$ to \eq{SIE-Z}, then we have \eq{G-Ito2} for $b(x)$ and $\sigma(x)$ which are defined by \eq{multi-level}. From \eq{G-Ito-j} and \eq{G-Ito2}, we will compute the stationary distribution of $Z(t)$.

We first consider this equation for $j=1$. In this case, \eq{G-Ito-j} becomes
\begin{align}
\label{eq:G-Ito-j1}
 & f_{1}(Z(t)) = f_{1}(Z(0)) + \int_{0}^{t} \theta e^{\theta Z(u)} 1(0 \le Z(u) < \ell_{1}) \sigma_{1} dW(u)  \nonumber\\
 & \quad + \int_{0}^{t} \Big(b_{1} \theta + \frac 12 \sigma_{1}^{2} \theta^{2}\Big) e^{\theta Z(u)} 1(0 \le Z(u) < \ell_{1}) du \nonumber\\
 & \quad  - \frac 12 \theta e^{\theta \ell_{1}} L_{\ell_{1}}(t) + \theta Y(t), \qquad t \ge 0, \; \theta \le 0.
\end{align}
Then, by the same arguments in the proof of \thr{k=2}, we have \eq{RBM-BAR1} and \eq{L-time3}, which imply
\begin{align}
\label{eq:phi1-general}
  \varphi_{1}(\theta) = \frac {e^{\theta \ell_{1}} - e^{-\beta_{1} \ell_{1}}}{\beta_{1} + \theta} \frac {1}{\sigma_{1}^{2}} \dd{E}[L_{\ell_{1}}(1)], \qquad \varphi_{1}(0) = \frac {1 - e^{-\beta_{1}\ell_{1}}}{\sigma_{1}^{2} \beta_{1}} \dd{E}[L_{\ell_{1}}(1)].
\end{align}
Hence, we have
\begin{align}
\label{eq:mgf-h1}
 & \varphi_{1}(\theta)/\varphi_{1}(0) = \frac {e^{\theta \ell_{1}} - e^{-\beta_{1} \ell_{1}}}{\beta_{1} + \theta} \frac {\beta_{1}} {1 - e^{-\beta_{1} \ell_{1}}} = \widehat{h}_{1}(\theta), \qquad \theta \le 0.
\end{align}
Thus, \eq{phi-j} is proved for $j=1$. We prove \eq{d-j} after \eq{phi-j} is proved for all $j \in \sr{N}_{k}$.

We next prove \eq{phi-j} for $j \in \{2,3,\ldots,k-1\}$. In this case, we use $f_{j}(Z(1)$ of \eq{G-Ito-j}. Take the difference $f_{j}(Z(1)) - f_{j-1}(Z(1))$ for each fixed $j$ and take the expectation under which $Z(\cdot)$ is stationary, then we have
\begin{align*}
  \sigma_{j}^{2}(\beta_{j} + \theta) \varphi_{j}(\theta) - e^{\theta \ell_{j}} \dd{E}[L_{\ell_{j}}(1)] + e^{\theta \ell_{j-1}} \dd{E}[L_{\ell_{j-1}}(1)] = 0, \qquad \theta \le 0,
\end{align*}
because $\beta_{j} = 2b_{j}/\sigma_{j}^{2}$. This yields
\begin{align}
\label{eq:RBM-BARj}
  \varphi_{j}(\theta) = & \frac {1}{\sigma_{j}^{2}(\beta_{j} + \theta)} \left(e^{(\theta + \beta_{j})\ell_{j}} - e^{(\theta + \beta_{j})\ell_{j-1}}\right) e^{-\beta_{j} \ell_{j}} \dd{E}[L_{\ell_{j}}(1)] \nonumber\\
  & \quad + \frac {e^{\theta \ell_{j-1}}}{\sigma_{j}^{2}(\beta_{j} + \theta)} \left( e^{-\beta_{j}(\ell_{j} - \ell_{j-1})} \dd{E}[L_{\ell_{j}}(1)] - \dd{E}[L_{\ell_{j-1}}(1)] \right).
\end{align}
Since $\varphi_{j}$ is the mgf of a measure on $[\ell_{j-1}, \ell_{j})$, we must have
\begin{align}
\label{eq:L-timej}
  \dd{E}[L_{\ell_{j-1}}(1)] = e^{-\beta_{j}(\ell_{j} - \ell_{j-1})} \dd{E}[L_{\ell_{j}}(1)], \qquad 2 \le j \le k-1.
\end{align}
Hence, \eq{RBM-BARj} becomes, for $j \in \{2,3,\ldots,k-1\}$,
\begin{align}
\label{eq:phij}
 & \varphi_{j}(\theta) = \frac {e^{(\theta + \beta_{j}) \ell_{j}} - e^{(\theta + \beta_{j})\ell_{j-1}}} {\sigma_{j}^{2}(\beta_{j} + \theta)} e^{-\beta_{j} \ell_{j}} \dd{E}[L_{\ell_{j}}(1)],\\
\label{eq:phij(0)}
 & \varphi_{j}(0) = \frac {1 - e^{-\beta_{j}(\ell_{j} - \ell_{{j-1}})}}{\sigma_{j}^{2} \beta_{j}} \dd{E}[L_{\ell_{j}}(1)].
\end{align}
Hence, we have \eq{phi-j} for $j=2,3,\ldots,k-1$.

We finally prove \eq{phi-j} for $j=k$. Similarly to the case for $k=2$ in the proof of \thr{k=2}, it follows from \eq{G-Ito2} that
\begin{align}
\label{eq:RBM-BARk+1}
  \frac 12 \sum_{i=1}^{k} \sigma_{i}^{2} \left(\beta_{i} + \theta \right) \varphi_{i}(\theta) + \dd{E}[Y(1)] = 0, \qquad \theta \le 0.
\end{align}
Similarly, from \eq{G-Ito-j} for $j=k-1$, we have
\begin{align}
\label{eq:RBM-BARk}
   \frac 12 \sum_{i=1}^{k-1} \sigma_{i}^{2} \left(\beta_{i} + \theta \right) \varphi_{i}(\theta) + \dd{E}[Y(1)]  - \frac 12 e^{\theta \ell_{k-1}} \dd{E}[L_{\ell_{k-1}}(1)]= 0, \qquad \theta \le 0.
\end{align}
Taking the difference of \eq{RBM-BARk+1} and \eq{RBM-BARk}, we have
\begin{align*}
  \sigma_{k}^{2} \left(\beta_{k} + \theta \right) \varphi_{k}(\theta) = - e^{\theta \ell_{k-1}} \dd{E}[L_{\ell_{k-1}}(1)],
\end{align*}
which yields
\begin{align}
\label{eq:mgf-h(k+1)}
  \varphi_{k}(\theta) = \frac {-1} {\sigma_{k}^{2} \left(\beta_{k} + \theta \right)} e^{\theta \ell_{k-1}} \dd{E}[L_{\ell_{k-1}}(1)], \qquad \varphi_{k}(0) = \frac {-1}{\sigma_{k}^{2} \beta_{k}} \dd{E}[L_{\ell_{k-1}}(1)].
\end{align}
Hence, we have \eq{phi-j} for $j=k$. Namely,
\begin{align*}
 \varphi_{k}(\theta)/\varphi_{k}(0) = \frac {\beta_{k}} {\beta_{k} + \theta} e^{\theta \ell_{k-1}} = \widehat{h}_{k}(\theta).
\end{align*}

It remains to prove \eq{d-j} for $j \in \sr{N}_{k}$. For this, we note that \eq{L-time3} is still valid, which is
\begin{align*}
  2 \dd{E}[Y(1)] = e^{-\beta_{1} \ell_{1}} \dd{E}[L_{\ell_{1}}(1)]= e^{-\beta_{1} (\ell_{1}-\ell_{0}^{+})} \dd{E}[L_{\ell_{1}}(1)].
\end{align*}
Hence, recalling that $\eta_{j} = \prod_{i=1}^{j} e^{\beta_{i}(\ell_{i} - \ell_{i-1}^{+})}$, \eq{L-timej} yields
\begin{align}
\label{eq:L-timej2}
  \dd{E}[L_{\ell_{j}}(1)] = e^{\beta_{j}(\ell_{j} - \ell_{j-1}^{+})} \dd{E}[L_{\ell_{j-1}}(1)] = 2 \dd{E}[Y(1)] \eta_{j}, \qquad j \in \sr{N}_{k-1}.
\end{align}

From \eq{phij(0)}, \eq{mgf-h(k+1)}, \eq{L-timej2} and the fact that $(e^{\beta_{j}(\ell_{j} - \ell_{j-1}^{+})} - 1) \eta_{j-1} = \eta_{j} - \eta_{j-1}$, we have
\begin{align}
\label{eq:phij(0)-2}
  \varphi_{j}(0) = \begin{cases}
\ds 2 \dd{E}[Y(1)] \frac {\eta_{j} - \eta_{j-1}}{\sigma_{j}^{2} \beta_{j}}, \quad & j \in \sr{N}_{k-1},  \medskip\\
\ds 2 \dd{E}[Y(1)] \frac {-1}{\sigma_{k}^{2} \beta_{k}} \eta_{k-1}, &   j = k.
\end{cases}
\end{align}
Since $\sum_{j=1}^{k} \varphi_{j}(0) = 1$, it follows from \eq{phij(0)-2} that
\begin{align}
\label{eq:normalize}
  \frac 1{2 \dd{E}[Y(1)]} & = \sum_{i=1}^{k} \frac {\eta_{i} - \eta_{i-1}}{\sigma_{i}^{2} \beta_{i}},
\end{align}
because $\eta_{k} = 0$. Substituting this into \eq{phij(0)-2} and using $\sigma_{i}^{2} \beta_{i} = 2 b_{i}$, we have \eq{d-j} for $j \in \sr{N}_{k}$ because $d_{j}$ is defined by \eq{dj}.

(ii) is proved for $k=2$ from (i) and (a) of \rem{k=2}. It is not hard to see that this observation (a) is also valid for any $b_{j}$ for $j \in \sr{N}_{k}$. Hence, (ii) can be proved also for $k \ge 2$ from (i).

\section{Proofs of preliminary lemmas}
\label{sec:preliminary}
\setnewcounter

\begin{figure}[h] 
  \centering
  \includegraphics[height=5.5cm]{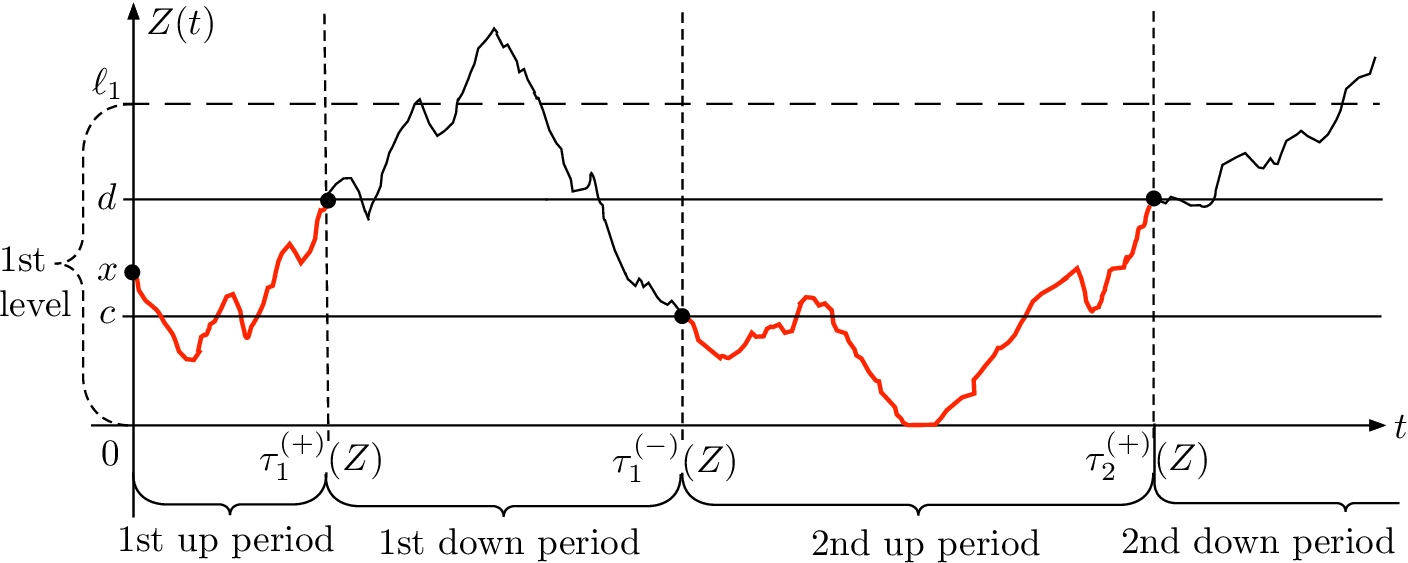} 
  \caption{Up and down level-crossing periods}
   \label{fig:spliting}
\end{figure}\vspace{-3ex}

\subsection{Proof of \lem{SIE-Z}}
\label{sec:existence}

Recall that \lem{SIE-Z} assumes the conditions of \eq{VD2+} and of the one-dimensional state-dependent SRBM with bounded drifts. Since $\ell_{1} > 0$, there are constants $c, d > 0$ such that $0 < c < d < \ell_{1}$. Using these constants, we construct the weak solution of $(Z(\cdot),W(\cdot))$ of \eq{SIE-Z}. The basic idea is to construct the sample path of $Z(\cdot)$ separately for disjoint time intervals, where, for the first interval, if $Z(0) < d$, then $Z(\cdot)$ stays there until it hits $d$ or, if $Z(0) \ge d$, then it stays there until it hits $c$, and, for the subsequent intervals, $Z(\cdot)$ starts below $c$ until hits $d > c$, which is called an up-crossing period, and those in which $Z(\cdot)$ starts start at $d$ or above it until hits $c < d$, which is called a down-crossing period. Namely, except for the first interval, the up-crossing period always starts at $c$, and the down-crossing period always starts at $d$ (see \fig{spliting}). In this construction, we also construct the filtration for which $(Z(\cdot),W(\cdot))$ is adapted.

Define $X_{1}(\cdot) \equiv \{X_{1}(t); t \ge 0\}$ as 
\begin{align}
 \label{eq:SIE-X1}
 X_{1}(t) = X_{1}(0) + \int_{0}^{t} \left(\sigma_{1} dW_{1}(u) + b_{1} du\right), \qquad t \ge 0,
\end{align}
and let $X_{2}(\cdot) \equiv \{X_{2}(t); t \ge 0\}$ be the solution of the following stochastic integral equation:
\begin{align}
\label{eq:SIE-X2}
  X_{2}(t) = X_{2}(0) & + \int_{0}^{t} \sigma(X_{2}(u)) dW_{2}(u) + \int_{0}^{t} b(X_{2}(u)) du, \qquad t \ge 0.
\end{align}
Note that the SIE \eq{SIE-X} is stochastically identical with the SIE \eq{SIE-X2}. Hence, as we discussed below \eq{SIE-X}, the SIE \eq{SIE-X2} has a unique weak solution under \cond{VD2}. Thus, the solution $X_{2}(\cdot)$ weakly exists because the assumptions of \lem{SIE-Z} imply \cond{VD2}. For this weak solution, we use the same notations for $X_{2}(\cdot)$, $W_{2}(\cdot)$ and stochastic basics $(\Omega,\sr{F},\dd{F},\dd{P})$ for convenience, where $\dd{F} = \{\sr{F}_{t}; t \ge 0\}$. Without loss of generality, we expand this stochastic basic which accommodates $X_{1}(\cdot)$, and have countable independent copies of $W_{i}(\cdot)$ and $X_{i}(\cdot)$ for $i=1,2$, which are denoted by $W_{n,i}(\cdot) \equiv \{W_{n,i}(t); t \ge 0\}$ and $X_{n,i}(\cdot) \equiv \{X_{n,i}(t); t \ge 0\}$ for $n=1,2,\ldots$.

We first construct the weak solution $Z(\cdot)$ of \eq{SIE-Z} when $Z(0) = x < d$, using $W_{n,i}(\cdot)$ and $X_{n,i}(\cdot)$. For this construction, we introduce up and down crossing times for a given real-valued semi-martingale $V(\cdot) \equiv \{V(t); t \ge 0\}$. Denote the $n$-th up-crossing time at $d$ from below by $\tau^{(+)}_{d,n}(V)$, and denote the down-crossing time at $c \; (< d)$ from above by $\tau^{(-)}_{c,n}(V)$. Namely, for $n \ge 1$ and $0 < c < d < \ell_{1}$,
\begin{align*}
  \tau^{(+)}_{d,n}(V) = \inf\{ u > \tau^{(-)}_{c,n-1}(V); V(u) \ge d\}, \quad \tau^{(-)}_{c,n}(V) = \inf\{ u > \tau^{(+)}_{d,n}(V); V(u) < c\},
\end{align*}
where $\tau^{(-)}_{c,0}(V) = 0$. Note that $\tau^{(+)}_{d,n}(Z)$ and $\tau^{(-)}_{c,n}(Z)$ may be infinite with positive probabilities. In this case, there is no further splitting, which causes no problem in constructing the sample path of $Z(\cdot)$ because such a sample path is already defined for all $t \ge 0$. After the weak solution is obtained, we will see that $\dd{P}_{y}[\tau^{(+)}_{d,n}(Z) < \infty] = 1$ for $y \in [0,d)$ by \lem{irreducible}, but $\tau^{(-)}_{c,n}(Z)$ may be infinite with a positive probability.

We now inductively construct $Z_{n}(t)\equiv \{Z_{n}(t); t \ge 0\}$ for $n=1,2,\ldots$, where the construction below is stopped when $\tau^{(-)}_{c,n}(Z_{n})$ diverges. For $n=1$, we denote the independent copy of $X_{1}(\cdot)$ with $X_{1}(0)=x < d$ by $X_{11}(\cdot) \equiv \{X_{11}(t); t \ge 0\}$, and define $Z_{11}(t)$ as
\begin{align}
\label{eq:ZX11}
 & Z_{11}(t) = X_{11}(t) + \sup_{u \in [0,t]} (-X_{11}(u))^{+}, \qquad  t \ge 0,
\end{align}
then it is well known that $Z_{11}(\cdot)$ is the unique solution of the stochastic integral equation:
\begin{align}
\label{eq:Z10}
  & Z_{11}(t) = Z_{11}(0) + \int_{0}^{t} d X_{11}(u) + Y_{11}(t), \qquad t \ge 0,
\end{align}
where $Y_{11}(t)$ is nondecreasing and $\int_{0}^{t} 1(Z_{11}(u) > 0) dY_{11}(u) = 0$ for $t \ge 0$. Furthermore, for $X_{11}(0) = Z_{11}(0)$, $Y_{11}(t) = \sup_{u \in [0,t]} (-X_{11}(u))^{+}$ (e.g., see \cite{KrukLehoRama2007}). Since $Z_{11}(0) = X_{11}(0) = x < d$ and $X_{11}(t) \le Z_{11}(t) \le d < \ell_{1}$ for $t \in [0,\tau^{(+)}_{d,1}(Z_{11})]$, \eq{Z10} can be written as
\begin{align}
\label{eq:Z11}
  & Z_{11}(t) = Z_{11}(0)  \nonumber\\
  & \quad + \int_{0}^{t} \left(\sigma(Z_{11}(u)) d W_{11}(u) + b(Z_{11}(u)) du\right) + Y_{11}(t), \qquad t \in [0,\tau^{(+)}_{d,1}(Z_{11})).
\end{align}
We next denote the independent copy of $X_{2}(\cdot)$ with $X_{2}(0)=d$ by $X_{12}(\cdot) \equiv \{X_{12}(t); t \ge 0\}$, and define
\begin{align}
\label{eq:ZX12}
  Z_{12}(t) = X_{12}\left(t - \tau^{(+)}_{d,1}(Z_{11})\right), \qquad t \ge \tau^{(+)}_{d,1}(Z_{11}),
\end{align}
then we have, for $t \in [\tau^{(+)}_{d,1}(Z_{11}), \tau^{(-)}_{c,1}(Z_{12}))$,
\begin{align}
\label{eq:Z12}
  Z_{12}(t) = d + \int_{\tau^{(+)}_{d,1}(Z_{11})}^{t} \left(\sigma(Z_{12}(u)) d W_{12}(u) + b(Z_{12}(u)) du\right),
\end{align}
where recall that $X_{12}(0) = d$. Define
\begin{align*}
  Z_{1}(t) & = Z_{11}(\tau^{(+)}_{d,1}(Z_{11}) \wedge t) + (Z_{12}(t) - d) 1(\tau^{(+)}_{d,1}(Z_{11}) < t), \qquad t \ge 0.
\end{align*}
then $Z_{11}(t)$ is stochastically identical with $Z_{12}(t)$ for $t \in [\tau^{(+)}_{d,1}(Z_{11}), \tau^{(+)}_{\ell_{1},1}(Z_{11}) \wedge \tau^{(+)}_{\ell_{1},1}(Z_{12}))$. Hence, it follows from \eq{Z11} and \eq{Z12} that, for $t \in [0, \tau^{(-)}_{c,1}(Z_{1}))$,
\begin{align}
\label{eq:Z1}
  Z_{1}(t) = Z_{1}(0) + \int_{0}^{t} \left(\sigma(Z_{1}(u)) d \widetilde{W}_{1}(u) + b(Z_{1}(u)) du\right) + Y_{1}(t),
\end{align}
where $Y_{1}(t) = Y_{11}(\tau^{(+)}_{d,1}(Z_{11}) \wedge t)$ and
\begin{align}
\label{eq:W1}
  \widetilde{W}_{1}(t) = W_{11}(t) 1(t < \tau^{(+)}_{d,1}(Z_{11})) + W_{12}(t) 1(t \ge \tau^{(+)}_{d,1}(Z_{11})).
\end{align}

We repeat the same procedure to inductively define $X_{n,i}(t)$ with $X_{n,i}(0) = c \, 1(i=1) + d \, 1(i=2)$ and $Z_{n,i}(t)$ for $i=1,2$ together with $\tau^{(+)}_{d,n}(Z_{n1})$ and $\tau^{(-)}_{c,n}(Z_{n2})$ for $n \ge 2$ by
\begin{align*}
 & X_{n1}(t) = c + \int_{\tau^{(-)}_{c,n-1}(Z_{n-1})}^{t} \left(\sigma_{1} d W_{n1}(u) + b_{1} du\right), \qquad t \ge \tau^{(-)}_{c,n-1}(Z_{n-1})\\
 & Z_{n1}(t) = X_{n1}(t) + \sup_{u \in [\tau^{(-)}_{c,n-1}(Z_{n-1}),t]} (-X_{n1}(u))^{+}, \qquad t \ge \tau^{(-)}_{c,n-1}(Z_{n-1})\\
 & Z_{n2}(t) = X_{n2}\left(t - \tau^{(+)}_{d,n}(Z_{n1})\right), \qquad t > \tau^{(+)}_{d,n}(Z_{n1}), \; X_{n2}(0) = d,
\end{align*}
as long as $\tau^{(-)}_{c,n-1}(Z_{n-1}) < \infty$, and define
\begin{align*}
  Z_{n}(t) & = Z_{n-1}(\tau^{(-)}_{c,n-1}(Z_{n-1}) \wedge t) + (Z_{n1}(\tau^{(+)}_{d,n}(Z_{n1}) \wedge t)  - c) 1(\tau^{(-)}_{c,n-1}(Z_{n-1}) < t) \nonumber\\
  & \quad + (Z_{n2}(t) - d) 1(\tau^{(+)}_{d,n}(Z_{n1}) < t),
\end{align*}
then we have, for $t \in [0, \tau^{(-)}_{c,n}(Z_{n2}))$,
\begin{align}
\label{eq:Zn}
  Z_{n}(t) = Z_{n}(0) + \int_{0}^{t} \left(\sigma(Z_{n}(u)) d \widetilde{W}_{n}(u) + b(Z_{n}(u)) du\right) + Y_{n}(t),
\end{align}
where
\begin{align*}
 & Y_{n}(t) = Y_{n-1}(\tau^{(-)}_{c,n-1}(Z_{n-1}) \wedge t) + \sup_{u \in [\tau^{(-)}_{c,n-1}(Z_{n-1}),\tau^{(+)}_{d,n}(Z_{n1}) \wedge t]} (-X_{n}(u))^{+},\\
 & \widetilde{W}_{n}(t) = \widetilde{W}_{n-1}(t) 1(t \le \tau^{(+)}_{d,n-1}(Z_{(n-1)1}) + \widetilde{W}_{(n-1)2}(t) 1(t > \tau^{(+)}_{d,1}(Z_{(n-1)1})).
\end{align*}

From \eq{Zn}, we can see that $Z_{n}(\cdot) \equiv \{Z_{n}(t); 0 \le t < \tau^{(-)}_{c,n}(Z_{n2})\}$ is the solution of \eq{SIE-Z} for $t < \tau^{(-)}_{c,n}(Z_{n2})$. Furthermore, $Z_{n}(t) = Z_{n+1}(t)$ for $0 \le t < \tau^{(-)}_{c,n}(Z_{n2})$. From this observation, we define $Z(\cdot)$ by $Z(0) = x$ and
\begin{align}
\label{eq:Zt}
 & Z(t) = Z(0) + \sum_{n=1}^{\infty} Z_{n}(t) 1(\tau^{(-)}_{c,n-1}(Z_{n-1}) \le t < \tau^{(-)}_{c,n}(Z_{n})),\\
\label{eq:Yt}
 & Y(t) = \sum_{n=1}^{\infty} Y_{n}(t) 1(\tau^{(-)}_{c,n-1}(Z_{n-1}) \le t < \tau^{(-)}_{c,n}(Z_{n})),\\
\label{eq:Wt}
 & W(t) = \sum_{n=1}^{\infty} \widetilde{W}_{n}(t) 1(\tau^{(-)}_{c,n-1}(Z_{n-1}) \le t < \tau^{(-)}_{c,n}(Z_{n})),
 \qquad t \ge 0,
\end{align}
where $\tau^{(-)}_{c,0}(Z_{0}) = 0$, then $Z(\cdot)$ is the solution of \eq{SIE-Z} for $t < \tau^{(-)}_{c,n}(Z_{n2})$ if $\tau^{(-)}_{c,n-1}(Z_{n-1}) < \infty$. Otherwise, if $\tau^{(-)}_{c,n-1}(Z_{n-1}) = \infty$ and $\tau^{(-)}_{c,m}(Z_{n-1}) < \infty$ for $m = 1,2, \ldots, n-2$, then we stop the procedure by the $(n-1)$-th step.

Up to now, we have assumed that $Z(0) = Z_{1}(0) = Z_{11}(0) = x < d$. If this $x$ is net less than $d$, then we start with $Z_{12}(\cdot)$ of \eq{ZX12} with $Z_{12}(0) = X_{12}(0) = x \ge d$, and replace $Z_{11}(\cdot)$ of \eq{ZX11} by
\begin{align*}
 & Z_{11}(t) = X_{11}(t) + \sup_{\tau^{(-)}_{c,1}(Z_{12}) < u \le t} (-X_{11}(u))^{+}, \qquad  t \ge 0.
\end{align*}
Then, define $Z_{1}(\cdot)$ as
\begin{align*}
  Z_{1}(t) & = Z_{12}(\tau^{(-)}_{c,1}(Z_{12}) \wedge t) + (Z_{11}(t) - c) 1(\tau^{(-)}_{c,1}(Z_{12}) \le t), \qquad t \ge 0,
\end{align*}
where $\tau^{(-)}_{c,1}(Z_{12}) < \tau^{(+)}_{d,1}(Z_{11})$ because the order of $Z_{11}(\cdot)$ and $Z_{12}(\cdot)$ is swapped. Similarly to the previous case that $x < d$, we repeat this procedure to inductively define $Z_{n}(\cdot)$ for $n \ge 2$, then we can defined $Z(\cdot)$ and $Y(\cdot)$ similarly to \eq{Zt} and \eq{Yt}.

Hence, $Z(\cdot)$ of \eq{Zt} is the solution of \eq{SIE-Z} if we show that there is some $n \ge 1$ for each $t > 0$ such that $t < \tau^{(-)}_{c,n}(Z)$. This condition is equivalent to $\sup_{n \ge 1} \tau^{(-)}_{c,n}(Z) = \infty$ almost surely. To see this, assume that $\tau^{(-)}_{c,n}(Z) < \infty$ for all $n \ge 1$, then let $J_{n} = \tau^{(-)}_{c,n}(Z) - \tau^{(-)}_{c,n-1}(Z)$ for $n \ge 1$, then $\{J_{n}; n \ge 2\}$ is a sequence of $i.i.d.$ positive valued random variables. Hence, we have
\begin{align*}
  \lim_{n \to \infty} \tau^{(-)}_{c,n}(Z) \ge \lim_{n \to \infty} \sum_{m=2}^{n} J_{m} = \infty, \qquad a.s.,
\end{align*}
and therefore $Z(t)$ is well defined for all $t \ge 0$. Otherwise, if $\tau^{(-)}_{c,n}(Z) = \infty$ for some $n \ge 1$, then we stop the procedure by the $n$-th step.

Thus, we have constructed the solution $Z(\cdot)$ of \eq{SIE-Z}. Note that the probability distribution of this solution does not depend on the choice of $c, d$ as long as $0 < c < d < \ell_{1}$ because of the independent increment property of the Brownian motion. Furthermore, this $Z(\cdot)$ is a strong Markov process because $Z_{n,1}(\cdot)$ and $Z_{n,2}(\cdot)$ are strong Markov processes (e.g. see (8.12) of \cite{ChunWill1990}, Theorem 21.11 of \cite{Kall2001}, Theorem 17.23 and Remark 17.2.4 of \cite{CoheElli2015}) and $Z(\cdot)$ is obtained by continuously connecting their sample paths using stopping times. Thus, the $Z(\cdot)$ is the weak solution of \eq{SIE-Z} which is strong Markov.

It remains to prove the weak uniqueness of the solution $Z(\cdot)$. This is immediate from the construction of $Z(\cdot)$. Namely, suppose that $\widetilde{Z}(\cdot)$ is the solution of \eq{SIE-Z} with $\widetilde{Z}(0) = x$ for given $x \ge 0$. Assume that $x < d$, then the process $\{\widetilde{Z}(t); 0 \le t < \tau^{(+)}_{d,1}(\widetilde{Z})\}$ with $\widetilde{Z}(0)=x < d < \ell_{1}$ is stochastically identical with $\{Z_{11}(t); 0 \le t < \tau^{(+)}_{d,1}(Z)\}$ with $Z_{11}(0)=x$, which is the unique solution of \eq{Z10}, while the process $\{\widetilde{Z}(t); \tau^{(+)}_{d,1}(\widetilde{Z}) < t \le \tau^{(-)}_{c,1}(\widetilde{Z})\}$ must be stochastically identical with $\{X_{12}(t); 0 \le t < \tau^{(-)}_{c,1}(Z)\}$ with $X_{12}(0)=d$, which is the unique weak solution of \eq{SIE-X2}. Similarly, we can see such stochastic equivalences in the subsequent periods for $\widetilde{Z}(0)=x < d$. On the other hand, if $\widetilde{Z}(0) = x \ge d$, then similar equivalences are obtained. Hence, $\widetilde{Z}(\cdot)$ and $Z(\cdot)$ have the same distribution for each fixed initial state $x \ge 0$. Thus, the $Z(\cdot)$ is a unique weak solution, and the proof of \lem{SIE-Z} is completed.

\begin{remark}
\label{rem:existence}
From an analogy to the reflecting Brownian motion on the half line $[0,\infty)$, it may be questioned whether the solution $Z(\cdot)$ of \eq{SIE-Z} can be directly obtained from the weak solution $X(\cdot)$ of \eq{SIE-X} by its absolute value, that is by $|X|(\cdot) \equiv \{|X(t)|; t \ge 0\}$. This question is affirmatively answered under \cond{VD2} by \citet{AtarCastReim2022}. It may be interesting to see how they prove (i) of \lem{SIE-Z0}, so we explain it below.

Recall that the solution $X(\cdot)$ of the SIE \eq{SIE-X} weakly exists under \cond{VD2}. If $|X|(\cdot)$ is the solution $Z(\cdot)$ of the stochastic integral equation \eq{SIE-Z}, then we must have
\begin{align}
\label{eq:SIE-|X|}
  |X|(t) = |X|(0) + \int_{0}^{t} (\sigma(|X|(u)) dW(u) + b(|X|(u)) du) + Y(t), \qquad t \ge 0.
\end{align}
On the other hand, from Tanaka formula \eq{Tanaka} for $a = 0$, we have
\begin{align}
\label{eq:Tanaka-X}
 & |X|(t) - |X|(0) = \int_{0}^{t} \sgn (X(u)) dX(u) + L_{0}(t) \nonumber\\
 & \quad = \int_{0}^{t} \sgn (X(u)) \left(\sigma(X(u)) dW(u) + b(X(u)) du\right)+ L_{0}(t), \qquad t \ge 0.
\end{align}
Hence, letting $Y(\cdot) = L_{0}(\cdot)$, \eq{SIE-|X|} is stochastically identical with \eq{Tanaka-X} if
\begin{align}
\label{eq:sgn-VD}
  \sigma(x) = \sigma(|x|), \qquad b(x) = \sgn(x) b(|x|), \qquad x \in \dd{R},
\end{align}
and if $W(\cdot)$ is replaced by $\widetilde{W}(\cdot) \equiv \{\sgn(X(t)) W(t); t \ge 0\}$. Since the stochastic integral in \eq{SIE-Z} does not depend on $\sigma(x)$ and $b(x)$ for $x < 0$, \eq{sgn-VD} does not cause any problem for \eq{SIE-Z}.
\end{remark}

\subsection{Proof of \lem{irreducible}}
\label{sec:irreducible}

Recall the definition of $\tau_{a} = \tau_{B}$ for $B = \{a\}$ (see \eq{tau-B}). We first prove that
\begin{align}
\label{eq:E-tau-a finite}
 & \dd{E}_{x}[\tau_{a}] < \infty, \qquad 0 \le x < a,\\
\label{eq:P-tau-a positive}
 & \dd{P}_{x}[\tau_{a} < \infty] > 0, \qquad 0 \le a < x,
\end{align}
Since $Z(\tau_{a} \wedge t) \le a$, $\dd{E}_{x}[e^{\theta Z(\tau_{a} \wedge t)}] < \infty$ for $\theta \in \dd{R}$. Hence, substituting the stopping time $\tau_{a} \wedge t$ into $t$ of the generalize Ito formula \eq{G-Ito2} for test function $f(x) = e^{\theta x}$ and taking the expectation under $\dd{P}_{x}$, we have, for $x <  a$ and $\theta \in \dd{R}$,
\begin{align}
\label{eq:G-Ito3}
 & \dd{E}_{x}[e^{\theta Z(\tau_{a} \wedge t)}] = e^{\theta x} + \dd{E}_{x} \left[\int_{0}^{\tau_{a} \wedge t} \gamma(Z(u),\theta) e^{\theta Z(u)} du\right] + \theta \dd{E}_{x}[Y(\tau_{a} \wedge t)], 
\end{align}
where $\gamma(x,\theta) = b(x) \theta + \frac 12 \sigma^{2}(x) \theta^{2}$. Note that f, for each $\varepsilon > 0$, $\gamma(x,\theta) \ge \varepsilon$ if
\begin{align}
\label{eq:theta-bound}
   \theta \ge \frac {-b(x)}{\sigma^{2}(x)} + \sqrt{\left(\frac{b(x)}{\sigma^{2}(x)}\right)^{2} + \frac {2\varepsilon} {\sigma^{2}(x)}} \; \mbox{ or } \; \theta \le \frac {-b(x)}{\sigma^{2}(x)} - \sqrt{\left(\frac{b(x)}{\sigma^{2}(x)}\right)^{2} + \frac {2\varepsilon} {\sigma^{2}(x)}}.
\end{align}
Recall that $\beta_{i} = 2b_{i}/\sigma^{2}_{i}$, and introduce the following notations.
\begin{align*}
  |\beta|_{\max} = \max_{i \in \sr{N}_{k}} |\beta_{i}|, \quad |\beta|_{\min} = \min_{i \in \sr{N}_{k}} |\beta_{i}|, \quad \sigma^{2}_{\max} = \max_{i \in \sr{N}_{k}} \sigma^{2}_{i}, \quad \sigma^{2}_{\min} = \min_{i \in \sr{N}_{k}} \sigma^{2}_{i}.
\end{align*}
Then, $|\beta|_{\max} < \infty$, $\sigma^{2}_{\max} < \infty$ and $\sigma^{2}_{\min} > 0$ by \cond{VD2}, which is assumed in \lem{irreducible}. Hence, for each $\varepsilon > 0$, $\gamma(x,\theta) \ge \varepsilon$ for $\theta \ge \frac 12\left(|\beta|_{\max} + \sqrt{|\beta|^{2}_{\max} + 8\varepsilon/\sigma^{2}_{\min}}\right)$ and $x \ge 0$. For this $\theta$, it follows from \eq{G-Ito3} that
\begin{align*}
  & e^{\theta a} - e^{\theta x} \ge \varepsilon \dd{E}_{x}\left[\int_{0}^{\tau_{a} \wedge t} e^{\theta Z(u)} du\right] \ge \varepsilon e^{\theta x}\dd{E}_{x}[\tau_{a} \wedge t], \qquad t \ge 0,
\end{align*}
because $\theta > 0$ and $e^{\theta Z(u)} \ge 1$ for $u \in [0,\tau_{a} \wedge t]$. This proves \eq{E-tau-a finite} because we have
\begin{align}
\label{eq:tau-bound1}
  \dd{E}_{x}[\tau_{a} \wedge t] \le (e^{\theta a} - e^{\theta x})/\varepsilon < \infty, \qquad x < a.
\end{align}

We next consider the case for $x > a > 0$. Similarly to the previous case but for $\theta < 0$, from \eq{theta-bound}, we have $\gamma(x,\theta) > \varepsilon$ for $x > a$ and $\varepsilon > 0$ if $\theta$ satisfies
\begin{align}
\label{eq:theta-}
  \theta \le - \frac 12\left(|\beta|_{\max} + \sqrt{|\beta|^{2}_{\max} + 8\varepsilon/\sigma^{2}_{\min}}\right) < 0.
\end{align}
Since $Y(t) = 0$ for $t \le \tau_{a}$ because $Z(0)=x > a$, we have, from \eq{G-Ito3}, for $\theta$ satisfying \eq{theta-},
\begin{align}
\label{eq:x>a>0}
  & \quad \dd{E}_{x}[e^{\theta Z(\tau_{a} \wedge t)}] \ge e^{\theta x} + \varepsilon \int_{0}^{t} \dd{E}_{x} [e^{\theta Z(u)} 1(u \le \tau_{a})]du, \qquad t \ge 0.
\end{align}
Assume that $\dd{P}_{x}(\tau_{a} = \infty) = 1$, then $\dd{P}_{x}[t > \tau_{a}] = 0$, so we have, from \eq{x>a>0}, 
\begin{align*}
  \dd{E}_{x}[e^{\theta Z(t)}1(t \le \tau_{a})] = \dd{E}_{x}[e^{\theta Z(\tau_{a} \wedge t)}] \ge e^{\theta x} + \varepsilon \int_{0}^{t} \dd{E}_{x} [e^{\theta Z(u)} 1(u \le \tau_{a})]du, \qquad t \ge 0.
\end{align*}
Denote $\dd{E}_{x}[e^{\theta Z(u)}1(u \le \tau_{a})]$ by $g(u)$. Then, after elementary manipulation, this yields
\begin{align*}
  \frac {d}{dt} \left(e^{-\varepsilon t} \int_{0}^{t} g(u) du\right) \ge e^{\theta x} e^{-\varepsilon t},
\end{align*}
and therefore, by integrating both sides of this inequality, we have
\begin{align*}
 e^{\theta a} \ge \frac 1t \int_{0}^{t} g(u) du \ge e^{\theta x} \frac {e^{\varepsilon t} - 1} {\varepsilon t},
\end{align*}
because $g(u) = \dd{E}_{x}[e^{\theta Z(u)}1(t \le \tau_{a})] \le e^{\theta a}$ for $\theta < 0$. Letting $t \to \infty$ in this inequality, we have a contradiction because its right-hand side diverges. Hence, we have \eq{P-tau-a positive}. We finally consider the case $0 = a < x$. If $\dd{P}_{x}[Y(\tau_{0}) = 0] = 1$, then \eq{x>a>0} holds, and the arguments below it works, which proves \eq{P-tau-a positive}. Otherwise, if $\dd{P}_{x}(Y(\tau_{0}) = 0) < 1$, that is, $\dd{P}_{x}(Y(\tau_{0}) > 0) > 0$, then $\dd{P}_{x}[\tau_{0} < \infty] > 0$ because of the definition of $Y(\cdot)$. Hence, we again have \eq{P-tau-a positive} for $a=0$.

We finally check Harris irreducible condition (see \eq{Harris1}). For this, let $\tau = \tau_{0} \wedge \tau_{\ell_{0}}$, then $\{Z(t); t \in (0,\tau)\}$ is stochastically identical with $\{X(t); t \in (0,\tau)\}$, where $X(t) \equiv X(0) + b_{1} t + \sigma_{1} W(t)$. Then, from Tanaka's formula \eq{Tanaka-} for $Z(\cdot)$, if $Z(0) = y \in (x, \ell_{1})$, 
\begin{align*}
  \frac 12 L_{x}(\tau \wedge t) & = (Z(\tau \wedge t)-x)^{-}\\
 & \quad + b_{1} \int_{0}^{\tau \wedge t} 1(Z(u) \le x) du + \sigma_{1} \int_{0}^{\tau \wedge t} 1(Z(u) \le x) dW(u).
\end{align*}
Hence, if $b_{1} \ge 0$, then
\begin{align}
\label{eq:La>0 1}
  \dd{E}_{y}[L_{x}(t)] \ge 2\dd{E}_{y}[(x - Z(\tau \wedge t)1(x > Z(\tau \wedge t)] > 0, \qquad t > 0.
\end{align}
Similarly, from \eq{Tanaka+} for $X(\cdot) = Z(\cdot)$, if $b_{1} < 0$, then, for $y \in (0,x) \subset (0,\ell_{1})$, 
\begin{align}
\label{eq:La>0 2}
  \dd{E}_{y}[L_{x}(t)] & \ge \dd{E}_{y}[L_{x}(\tau \wedge t)] \nonumber\\
 & \ge 2\dd{E}_{y}[(Z(\tau \wedge t)-x)^{+}] - 2b_{1} \dd{E}_{y}\left[\int_{0}^{\tau \wedge t} 1(Z(u) > x) du\right] > 0.
\end{align}

Assume that $b_{1} \ge 0$, then we choose $y \in (x, \ell_{1})$ and the Lebesque measure on $[0,y]$ for $\psi$. Then, it follows from \eq{La>0 1} and \eq{L-time1} with $g = 1_{B}$ that $\psi(B) > 0$ for $B \in \sr{B}(\dd{R}_{+})$ implies
\begin{align*}
  \dd{E}_{y}\left[\int_{0}^{t} 1_{B}(Z(u)) \sigma_{1}^{2} du\right] \ge \dd{E}_{y} \left[\int_{0}^{\infty} 1_{B}(x) L_{x}(t) \psi(dx)\right] > 0.
\end{align*}
Since $Z(\cdot)$ hits state $y \in (0,\ell_{1})$ from any state in $S$ with positive probability, this inequality implies the Harris irreducibility condition \eq{Harris1}. Similarly, this condition is proved for $b_{1} < 0$ using \eq{La>0 2} and the Lebesgue measure on $[y, \ell_{1}]$ for $\psi$. Thus, the proof of \lem{irreducible} is completed.

\subsection{Proof of \lem{stability}}
\label{sec:stability}

Obviously, $b_{*} < 0$ is necessary for $Z(\cdot)$ to have a stationary distribution because $Z(t)$ $a.s.$ diverges if $b_{*} > 0$ by the strong law of large numbers and \lem{irreducible} while $Z(\cdot)$ is null recurrent if $b_{*} = 0$.

Conversely, assume that $b_{*} < 0$. We note the following fact which is partially a counter part of \eq{E-tau-a finite}.

\begin{lemma}
\label{lem:Harris}
If $b_{*} < 0$ , then
\begin{align}
\label{eq:Harris5}
  \dd{E}_{x}[\tau_{a}] < \infty, \qquad \ell_{*} \le a < x.
\end{align}
\end{lemma}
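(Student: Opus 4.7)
The plan is to exploit that starting from $x$ with $\ell_{*} \le a < x$, the process cannot leave the region $[\ell_{*}, \infty)$ before hitting $a$, so the diffusion coefficients and the regulator are frozen, and the problem reduces to the elementary hitting-time computation for Brownian motion with negative drift. Concretely, by continuity of $Z(\cdot)$ (guaranteed by \lem{SIE-Z}), $Z(t) \ge a \ge \ell_{*}$ for all $t \in [0, \tau_{a}]$, so under \eq{VD3} we have $\sigma(Z(t)) = \sigma_{*}$ and $b(Z(t)) = b_{*}$ on this interval. Moreover, since $a \ge \ell_{*} > \ell_{1} > 0$, the process stays strictly positive up to $\tau_{a}$, hence $Y(\tau_{a} \wedge t) = 0$. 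Therefore \eq{SIE-Z} collapses on $[0, \tau_{a}]$ to
\begin{align*}
  Z(\tau_{a} \wedge t) = x + \sigma_{*} W(\tau_{a} \wedge t) + b_{*}(\tau_{a} \wedge t), \qquad t \ge 0.
\end{align*}

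Next, I would take expectations under $\dd{P}_{x}$. Since $\tau_{a} \wedge t$ is a bounded stopping time and $\sigma_{*} W(\cdot)$ is a continuous square-integrable martingale, optional stopping gives $\dd{E}_{x}[\sigma_{*} W(\tau_{a} \wedge t)] = 0$, so
\begin{align*}
  \dd{E}_{x}[Z(\tau_{a} \wedge t)] = x + b_{*}\, \dd{E}_{x}[\tau_{a} \wedge t].
\end{align*}
Using $Z(\tau_{a} \wedge t) \ge a$ (again by continuity and the definition of $\tau_{a}$) together with $b_{*} < 0$ rearranges to
\begin{align*}
  \dd{E}_{x}[\tau_{a} \wedge t] \le \frac {x - a}{-b_{*}}.
\end{align*}
Letting $t \to \infty$ and applying monotone convergence yields $\dd{E}_{x}[\tau_{a}] \le (x-a)/(-b_{*}) < \infty$, as required.

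An alternative in the style of the proof of \lem{irreducible} would be to apply the Ito formula to $f(x) = e^{\theta x}$ with $\theta > 0$ chosen so that $\gamma(x,\theta) \equiv b_{*}\theta + \tfrac 12 \sigma_{*}^{2}\theta^{2} \le -\varepsilon$ on $[\ell_{*}, \infty)$, which is possible precisely when $b_{*} < 0$ (any $\theta \in (0, -2b_{*}/\sigma_{*}^{2})$ works). Since $Y(\tau_{a} \wedge t) = 0$ and $e^{\theta Z(u)} \ge e^{\theta a}$ on $[0, \tau_{a}]$, the same stopping argument gives $e^{\theta x} \ge e^{\theta a} + \varepsilon e^{\theta a} \dd{E}_{x}[\tau_{a}\wedge t]$, leading to the same conclusion. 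There is no real obstacle: the only substantive observation is that the assumption $a \ge \ell_{*}$ freezes the coefficients and eliminates $Y(\cdot)$ before $\tau_{a}$, reducing the statement to a one-line calculation for drifted Brownian motion.
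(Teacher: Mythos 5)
Your proof is correct and follows essentially the same route as the paper: freeze the coefficients on $[\ell_{*},\infty)$, note the regulator vanishes before $\tau_{a}$, and apply optional stopping to the martingale part of the resulting drifted Brownian motion. The only (harmless) difference is that you obtain the bound $\dd{E}_{x}[\tau_{a}\wedge t]\le (x-a)/(-b_{*})$ and pass to the limit by monotone convergence, whereas the paper derives the exact identity $\dd{E}_{x}[\tau_{a}]=(x-a)/(-b_{*})$ using the strong law of large numbers; your version is, if anything, slightly cleaner since the inequality suffices for \eq{Harris5}.
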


\begin{proof}
Assume that $\ell_{*} \le a < x$, and let $X(t) \equiv x + b_{*}t + \sigma_{*} W(t)$ for $t \ge 0$. Since $\ell_{*} < x$, $\{Z(t); 0 \le t \le \tau^{X}_{\ell_{*}}\}$ under $\dd{P}_{x}$ has the same distribution as $\{X(t); 0 \le t \le \tau^{X}_{\ell_{*}}\}$, where $\tau^{X}_{y} = \inf\{u \ge 0; X(u) = y\}$ for $y \ge \ell_{*}$. Hence, applying the optional sampling theorem to the martingale $X(t) - x - b_{*} t$ for stopping time $\tau^{X}_{a} \wedge t$, we have
\begin{align*}
  \dd{E}_{x}[X(\tau^{X}_{a} \wedge t) - x - b_{*} (\tau^{X}_{a} \wedge t)] = 0, \qquad t \ge 0.
\end{align*}
Since $X(t) \to - \infty$ as $t \to \infty$ w.p.1 by strong law of large numbers, letting $t \to \infty$ in this equation yields $b_{*} \dd{E}_{x}[\tau^{X}_{a}] = a - x$. Hence, we have
\begin{align}
\label{eq:tau-bound2}
  \dd{E}_{x}[\tau_{a}] = \dd{E}_{x}[\tau^{X}_{a}] = \frac {-1}{b_{*}} (x - a), \qquad \ell_{*} \le a < x,
\end{align}
This proves \eq{Harris5}.
\end{proof}

We return to the proof of \lem{stability}. For $n \ge 1$ and $x, y > \ell_{*}$ such that $x < y$, inductively define $S_{n}, T_{n}$ as
\begin{align*}
  S_{n} = \inf\{t > T_{n-1}; Z(t) = y\}, \qquad T_{n} = \inf\{t > S_{n}; Z(t) = x\},
\end{align*}
where $T_{0} = 0$. Because $\ell_{*} < x < y$, we have, from \eq{E-tau-a finite} and \eq{Harris5}, 
\begin{align*}
  0 < \dd{E}_{x}[\tau_{y}] < \dd{E}_{x}[T_{1}] \le \dd{E}_{x}[\tau_{y}] + \dd{E}_{y}[\tau_{x}] < \infty.
\end{align*}
Hence, $Z(\cdot)$ is a regenerative process with regeneration cycles $\{T_{n}; n \ge 1\}$ because the sequence of $\{Z(t); T_{n-1} \le t < T_{n}\}$ for $n \ge 1$ is $i.i.d.$ by its strong Markov property. Hence, $Z(\cdot)$ has the stationary probability measure $\pi$ given by
\begin{align}
\label{eq:invariant-m}
  \pi(B) = \frac 1{\dd{E}_{x}(T_{1})} \dd{E}_{x}\left[ \int_{0}^{T_{1}} 1(Z(u) \in B) du \right], \qquad B \in \sr{B}(\dd{R}_{+}).
\end{align}
Thus, $Z(\cdot)$ is positive recurrent.

\section{Concluding remarks}
\label{sec:concluding}
\setnewcounter

We discuss two topics here. 

\subsection{Process limit}
\label{sec:process limit}

It is conjectured in \cite{Miya2024} that a process limit of the diffusion scaled queue length in the 2-level $GI/G/1$ queue in heavy traffic is the solution $Z(\cdot)$ of stochastic integral equation \eq{SIE-Z} for the 2-level SRBM. As we discussed in \rem{compatibility}, the stationary distribution of $Z(\cdot)$ is identical with the limit of the stationary distribution of the scaled queue length in the 2-level $GI/G/1$ queue in heavy traffic, obtained in \cite{Miya2024}. This strongly supports this conjecture on the process limit.

We believe that the conjecture is true. However, the standard proof for diffusion approximation based on functional central limit theorem may not work because of the state dependent arrivals and service speed in the 2-level $GI/G/1$ queue. We are now working on this problem by formulating the queue length process for the 2-level $GI/G/1$ queue as a semi-martingale. However, we have not yet completed its proof, so this is an open problem.

\subsection{Stationary distribution under weaker conditions}
\label{sec:weaker conditions}

In this paper, we derived the stationary distribution for a one-dimensional multi-level SRBM under the stability condition. In the view of \cor{general k}, it is naturally questioned whether a similar stationary distribution is obtained under more general conditions than \cond{multi-level}.

To consider this problem, we first need the existence of the solution $Z(\cdot)$ of \eq{SIE-Z}, for which the existence of the solution $X(\cdot)$ of \eq{SIE-X} is sufficient as discussed in \rem{existence}. For the latter existence, \cond{VD2} is weaker than \cond{multi-level}, but Theorem 5.15 of \cite{KaraShre1998} and Theorem 23.1 of \cite{Kall2001} show that it can be further weakened to
\begin{condition}
\label{cond:minimal VD}
\begin{align}
\label{eq:sigma positive}
 & \sigma^{2}(x) > 0, \qquad \forall x \in \dd{R},\\
\label{eq:local integrability 1}
 & \int_{x_{1}}^{x_{2}} \frac 1{\sigma^{2}(y)} dy < \infty, \qquad \forall (x_{1},x_{2}) \in \dd{R}^{2} \mbox{ satisfying } x_{1} < x_{2},\\
\label{eq:local integrability 2}
 & \int_{x-\varepsilon}^{x+\varepsilon} \frac {|b(y)|} {\sigma^{2}(y)} dy < \infty, \qquad \forall x \in \dd{R}, \exists \varepsilon > 0.
\end{align}
\end{condition}
It is easy to see that \cond{minimal VD} is indeed implied by \cond{VD2}. Note that the local integrability condition \eq{local integrability 1} implies that
\begin{align*}
  \lim_{\varepsilon \downarrow 0} \int_{x-\varepsilon}^{x+\varepsilon} \frac 1{\sigma^{2}(y)} dy < \infty, \qquad \forall x \in \dd{R},
\end{align*}
which is equivalent to $S_{\sigma} = \emptyset$, where
\begin{align*}
  S_{\sigma} = \left\{x \in \dd{R}; \lim_{\varepsilon \downarrow 0} \int_{x-\varepsilon}^{x+\varepsilon} \frac 1{\sigma^{2}(y)} dy = \infty\right\}.
\end{align*}
This condition $S_{\sigma} = \emptyset$ is needed for $X(t)$ to exist for all $t \ge 0$ in the weak sense as shown by Theorem 23.1 of \cite{Kall2001} and its subsequent discussions.

Assume \cond{minimal VD} for general $\sigma(x)$ and $b(x)$. If these functions are well approximated by simple functions (e.g., the discontinuity points of $\sigma(x)$ and $b(x)$ is finite for $x$ in each finite interval) and if $b(x) \not= 0$ for all $x \ge 0$, then \cor{general k} suggests that the stationary density is given by \eq{hx} under the condition that
\begin{align}
\label{eq:stability g}
  \int_{0}^{\infty} \frac {1}{\sigma^{2}(x)} \exp\left(\int_{0}^{x} \frac {2b(y)}{\sigma^{2}(y)} dy\right) dx < \infty.
\end{align}

To legitimize this suggestion, we need to carefully consider the approximation, but we have not yet done it. So, we leave it as a conjecture. 

\appendix

\section*{Appendix}
\setnewcounter
\setcounter{section}{1}

\subsection{Weak solution of a stochastic integral equation}
\label{app:weak-solution}

There are two kinds of solutions for a stochastic integral equation such as \eq{SIE-Z}. We here only consider them for the SIE \eq{SIE-Z}. Recall that this equation is defined on stochastic basis $(\Omega,\sr{F},\dd{F},\dd{P})$. On this stochastic basis, if \eq{SIE-Z} holds almost surely on this stochastic basis, then the SIE \eq{SIE-Z} is said to have a strong solution. In this case, the standard Brownian motion $W(\cdot)$ is defined on $(\Omega,\sr{F},\dd{F},\dd{P})$. On the other hand, the SIE \eq{SIE-Z} is said to have a weak solution if there are some stochastic basis $(\widetilde{\Omega},\widetilde{\sr{F}},\widetilde{\dd{F}},\widetilde{\dd{P}})$ and some $\widetilde{\dd{F}}$-adapted process $(Z(\cdot),W(\cdot),Y(\cdot))$ on it such that \eq{SIE-Z} holds almost surely and $W(\cdot)$ is the standard Brownian motion under $\widetilde{\dd{P}}_{x}$ for each $x \ge 0$, where $\widetilde{\dd{P}}_{x}$ is the conditional distribution of $\widetilde{\dd{P}}$ given $Z(0) = x$ (e.g., see \cite[Section 5.3]{KaraShre1998}).

It may be better to use a different notation for the weak solution, e.g., $(\widetilde{Z}(\cdot),\widetilde{W}(\cdot),\widetilde{Y}(\cdot))$. However, we have used the same notation not only for this process but also stochastic basis for notational convenience. Thus, when we discuss about the weak solution, the stochastic basis $(\Omega,\sr{F},\dd{F},\dd{P})$ is considered to be appropriately replaced.

\subsection{Local time and generalized Ito formula}
\label{app:local-time}

We briefly discuss about local time for a generalized Ito formula \eq{G-Ito}. This Ito formula is also called Ito-Meyer-Tanaka formula (e.g., see Theorem 6.22 of \cite{KaraShre1998} and Theorem 22.5 of \cite{Kall2001}). Let $X(\cdot)$ be a continuous semi-martingale with finite quadratic variations $[X]_{t}$ for all $t \ge 0$. For this $X(\cdot)$, local time $L_{x}(t)$ for $x \in \dd{R}$ and $t \ge 0$ is defined through
\begin{align}
\label{eq:L-time1}
  \int_{-\infty}^{\infty} L_{x}(t) g(x) dx = \int_{0}^{t} g(X(u)) d[X]_{u} \quad \mbox{ for any measurable function $g$}.
\end{align}
See Theorem 7.1 of \cite{KaraShre1998} for details about the definition of local time. Note that the local time of \cite{KaraShre1998} is half of the local time in this paper. Applying $g(y) = 1_{(x-\varepsilon,x+\varepsilon)}(y)$ for $\varepsilon > 0$ to \eq{L-time1}, we can see that
\begin{align}
\label{eq:L-time2}
  L_{x}(t) = \lim_{\varepsilon \downarrow 0} \frac 1{2\varepsilon} \int_{0}^{t} 1_{(x-\varepsilon,x+\varepsilon)}(X(u)) d[X]_{u}, \quad a.s. \quad x \in \dd{R}, t \ge 0
\end{align}
This can be used as the definition of the local time.

There are two versions of the local time since $L_{x}(t)$ is continuous in $t$, but may not be continuous in $x$. So, usually, the local time $L_{x}(t)$ is assumed to be right-continuous for the generalized Ito formula \eq{G-Ito}. However, if the finite variation component of $X(\cdot)$ is not atomic, then $L_{x}(t)$ is continuous in $x$ (see Theorem 22.4 of \cite{Kall2001}). In particular, the finite variation component of $Z(\cdot)$ is continuous by \lem{SIE-Z}, so we have the following lemma.

\begin{lemma}
\label{lem:LT-time}
For the $Z(\cdot)$ of an 1-dimensional state-dependent SRBM with bounded drifts, its local time $L_{x}(t)$ is continuous in $x$ for each $t \ge 0$. Furthermore, $\dd{E}[L_{x}(t)]$ is finite by \eq{L-time1} for $X(\cdot) = Z(\cdot)$.
\end{lemma}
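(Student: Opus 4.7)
The statement has two parts, and I would handle them separately.

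For the continuity of $L_{x}(t)$ in $x$, the plan is to verify that the finite variation component of $Z(\cdot)$ is non-atomic and then invoke Theorem 22.4 of \cite{Kall2001}. Writing $Z(t) = Z(0) + M(t) + A(t)$ with $M(t) = \int_{0}^{t}\sigma(Z(u))\,dW(u)$ and $A(t) = \int_{0}^{t} b(Z(u))\,du + Y(t)$, the first summand of $A$ is absolutely continuous in $t$, and by \lem{SIE-Z} the regulator $Y(\cdot)$ is also continuous. Hence $A(\cdot)$ is continuous, the associated signed measure $dA$ carries no atoms, and the cited theorem supplies a jointly continuous version of the local time; in particular $L_{x}(t)$ is continuous in $x$ for each fixed $t \ge 0$.

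For finiteness of $\dd{E}[L_{x}(t)]$, I would apply the generalized Ito formula \eq{G-Ito} to the convex function $f(y) = (y-x)^{+}$, for which $f'(y-) = 1(y > x)$ and $\mu_{f} = \delta_{x}$. This gives Tanaka's formula
\begin{align*}
  \tfrac{1}{2}L_{x}(t) = (Z(t)-x)^{+} - (Z(0)-x)^{+} - \int_{0}^{t} 1(Z(u)>x)\bigl(\sigma(Z(u))\,dW(u) + b(Z(u))\,du + dY(u)\bigr).
\end{align*}
For $x > 0$ the $dY$-integral vanishes because $Y$ grows only on $\{Z=0\}$, and under \cond{VD2} the stochastic integral has quadratic variation bounded by $\sigma_{\max}^{2}\,t$, hence is a true mean-zero martingale. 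Taking expectations and using $(Z(t)-x)^{+} \le Z(t)$, $(Z(0)-x)^{+} \ge 0$, and $|b| \le \sup_{y}|b(y)| < \infty$, one obtains $\dd{E}[L_{x}(t)] \le 2\dd{E}[Z(t)] + 2t\sup_{y}|b(y)|$. A standard Skorohod-reflection bound $Z(t) \le 2Z(0) + |M(t)| + \sup_{s \le t}|M(s)| + 2t\sup_{y}|b(y)|$, together with Doob's $L^{2}$ inequality and $\dd{E}[M(t)^{2}] \le \sigma_{\max}^{2}\,t$, yields $\dd{E}[Z(t)] < \infty$ whenever $Z(0)$ has finite mean (in particular for deterministic initial data). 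For $x < 0$ one has $L_{x}(t) = 0$ since $Z(\cdot) \ge 0$.

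The main subtle point is the boundary case $x = 0$: Tanaka's formula at $x = 0$ retains the contribution of $dY$, so the above expectation calculation cannot be carried out directly. To handle this I would pass to the limit $x \downarrow 0$, using the continuity of $L_{x}(t)$ in $x$ established in the first part together with Fatou's lemma and the uniform-in-$x$ bound $\dd{E}[L_{x}(t)] \le 2\dd{E}[Z(t)] + 2t\sup_{y}|b(y)|$ obtained for $x > 0$. Once this two-step procedure is carried through, both conclusions follow with only routine semimartingale estimates beyond the cited Kallenberg theorem.
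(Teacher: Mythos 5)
Your first part is exactly the paper's argument: you decompose $Z(\cdot)$ so that its finite-variation component $\int_{0}^{t} b(Z(u))\,du + Y(t)$ is continuous by \lem{SIE-Z}, hence non-atomic, and you invoke Theorem 22.4 of \cite{Kall2001} to get continuity of $L_{x}(t)$ in $x$; nothing to add there. For the finiteness of $\dd{E}[L_{x}(t)]$ you genuinely diverge from the paper, which simply reads finiteness off the occupation-time identity \eq{L-time1} applied to $Z(\cdot)$ (so that the space-averaged local time is controlled by $\dd{E}\bigl[\int_{0}^{t}\sigma^{2}(Z(u))\,du\bigr]$, the transfer from almost every $x$ to the given $x$ being left implicit), whereas you derive a pointwise bound from Tanaka's formula with $f(y)=(y-x)^{+}$. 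Your route is more explicit and quantitative, but as written it proves less than the paper needs: the bound $\dd{E}[L_{x}(t)]\le 2\dd{E}[Z(t)]+2t\sup_{y}|b(y)|$ requires $\dd{E}[Z(0)]<\infty$, while in \sectn{proof-T31} the lemma is invoked with $Z(0)$ distributed as the not-yet-identified stationary law, whose first moment is not available at that point of the argument.

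The gap is easy to close within your own framework: use the other Tanaka formula \eq{Tanaka-}, i.e.\ $f(y)=(y-x)^{-}$, instead of $(y-x)^{+}$. Then the boundary term satisfies $(Z(t)-x)^{-}\le x$, the $dY$-integral equals $Y(t)$, and your Skorohod-map estimate $Y(t)\le \sup_{s\le t}|M(s)|+t\sup_{y}|b(y)|$ combined with Doob's inequality bounds $\dd{E}[L_{x}(t)]$ independently of the law of $Z(0)$. Two smaller points. First, your appeals to $\sigma_{\max}^{2}$ (for the true-martingale property of the $dW$-integral and for $\dd{E}[M(t)^{2}]\le\sigma_{\max}^{2}t$) presuppose $\sup_{x}\sigma(x)<\infty$, which \cond{VD2} does not grant; this is harmless in the multi-level setting where the lemma is actually used, but it should be stated as an assumption or handled by localization. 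Second, the special treatment of $x=0$ is unnecessary: the regulator satisfies $\int_{0}^{t}1(Z(u)>0)\,dY(u)=0$ by definition, and the indicator in your Tanaka formula is the strict one $1(Z(u)>x)$, so the $dY$-term vanishes at $x=0$ exactly as it does for $x>0$; your Fatou/continuity detour is valid but redundant.
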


Let $f$ be a concave test function from $\dd{R}$ to $\dd{R}$, then $-f$ is a convex function, where $(-f)(x) = - f(x)$, so the generalized Ito formula \eq{G-Ito} becomes
\begin{align}
\label{eq:G-Ito+concave}
  f(X(t)) = f(X(0)) + \int_{0}^{t} f'(X(u)-) dX(u) - \frac 12 \int_{0}^{\infty} L_{x}(t) \mu_{-f}(dx), \qquad t \ge 0,
\end{align}

For constant $a \in \dd{R}$, let $f(x) = (x-a)^{+} \equiv \max(0,x-a)$ for \eq{G-Ito}, then $f'(x-) = 1(x > a)$ and $\mu_{f}(B) = 1(a \in B)$. Hence, it follows from \eq{G-Ito} that
\begin{align}
\label{eq:Tanaka+}
 & (X(t) - a)^{+} = (X(0) - a)^{+} + \int_{0}^{t} 1(X(u)>a) dX(u) + \frac 12 L_{a}(t).
\end{align}
Similarly, applying $f(x) = (x-a)^{-} \equiv \max(0, -(x-a))$ and $f(x) = |x-a|$, we have, by \eq{G-Ito+concave} and \eq{G-Ito},
\begin{align}
\label{eq:Tanaka-}
 & (X(t)-a)^{-} = (X(0)-a)^{-} - \int_{0}^{t} 1(X(u) \le a) dX(u) + \frac 12 L_{a}(t), \\
\label{eq:Tanaka}
 & |X(t)-a| = |X(0)-a| + \int_{0}^{t} \sgn(X(u)- a) dX(u) + L_{a}(t), \qquad t \ge 0,
\end{align}
where $\sgn(x) = 1(x>0) - 1(x \le 0)$. Note that either one of these three formulas can be used to define local time $L_{a}(t)$. In particular, \eq{Tanaka} is called a Tanaka formula because it is originally studied for Brownian motion by \citet{Tana1963}. 

\section*{Acknowledgement} 

This study is originally motivated by the BAR approach, coined by Jim Dai (e.g., see \cite{BravDaiMiya2024}). I am grateful to him for his continuous support for my work. I am also grateful to Rami Atar for his helpful comments on the solution of stochastic integral equation \eq{SIE-Z}. I also benefit Rajeev Bhaskaran through personal communications. At last but not least, I sincerely thank Krishnamoorthy for encouraging me to present a talk at International Conference on Advances in Applied Probability and Stochastic Processes, held in Thrissur, India in January, 2024. This paper is written to follow up this talk and my paper \cite{Miya2024}.


\def\cprime{$'$} \def\cprime{$'$} \def\cprime{$'$} \def\cprime{$'$}
  \def\cprime{$'$} \def\cprime{$'$} \def\cprime{$'$}

\end{document}